%Constructed by LHL, 12/09/2015,12/21/2015,12/23/2015
%Last modified by P. Ming, 12/12/2015,12/20/2015£¬01/12/2016
% final version by P. Ming, 04/21/2017
% new numerical examples are included
\documentclass[smallextended,final]{svjour3}%{amsart}
\usepackage[numbers]{natbib}
\usepackage{epsf,latexsym,amsmath,amsfonts,amssymb,subfigure,mathrsfs,graphicx,stmaryrd,eufrak,esint,amscd}
\usepackage{color}

%\theoremstyle{plain}
%\newtheorem{theorem}{Theorem}[section]
%\newtheorem{lemma}[theorem]{Lemma}
%\newtheorem{prop}[theorem]{Proposition}
%\newtheorem{coro}[theorem]{Corollary}
%\newtheorem{lemma}{Lemma}[section]
%\newtheorem{prop}{Proposition}[section]

%\theoremstyle{definition}
%\newtheorem{definition}[theorem]{Definition}
%\newtheorem{example}[theorem]{Example}
%\newtheorem{xca}[theorem]{Exercise}
%
%\theoremstyle{remark}
%\newtheorem{remark}[theorem]{Remark}
%\numberwithin{equation}{section}
\newcommand{\abs}[1]{\lvert#1\rvert}

\newcommand{\Lr}[1]{\left(#1\right)}
\newcommand{\lr}[1]{\bigl(#1\bigr)}
\newcommand{\set}[2]{\{\,#1\,\mid\,#2\,\}}
\newcommand{\diff}[2]{\frac{\partial #1}{\partial #2}}
\newcommand{\jump}[1]{[\![#1]\!]}
\newcommand{\nm}[2]{\|#1\|_{#2}}
\newcommand{\wnm}[1]{|\!|\!|#1|\!|\!|_{\iota,h}}
\def\ldof{d^{(l)}}
\def\mdof{d^{(m)}}

\newcommand{\mc}[1]{\mathcal{#1}}

\def\mr{\mathrm}

\def\ka{\kappa}
\def\al{\alpha}
\def\eps{\epsilon}

\def\de{\delta}
\def\na{\nabla}
\def\pa{\partial}

\def\Om{\Omega}
\def\lam{\lambda}

\def\C{\mathbb C}

\def\dx{\,\mathrm{d}x}
\def\dtau{\,\mathrm{d}\tau}

\def\D{\mathbb D}

\newcommand{\divop}{\na\cdot}
\newcommand{\curl}{\text{curl}}
\newcommand{\trop}{\text{tr}}
\begin{document}
\title{Two Robust Nonconforming H$^2-$elements for Linear Strain Gradient Elasticity}
%\subtitle

\author{Hongliang Li \and Pingbing Ming \and Zhong-ci Shi}
\institute{H.L. Li \at Institute of Electronic Engineering, China Academy of Engineering Physics, Mianyang, 621900, China\\
  \email{lihongliang@mtrc.ac.cn}\\
  \and P.B. Ming \and Z.-C. Shi \at The State Key Laboratory of Scientific and Engineering Computing,\\
Academy of Mathematics and Systems Science, Chinese Academy of Sciences, \\
  No. 55, Zhong-Guan-Cun East Road, Beijing 100190, China\\
  University of Chinese Academy of Sciences, Beijing, 100049, China\\
  \email{mpb@lsec.cc.ac.cn}\and\email{shi@lsec.cc.ac.cn}}

\date{Received: January 20: 2016 / Accepted:}
%\thanks{}
\maketitle

\begin{abstract}
We propose two finite elements to approximate a boundary value problem arising from
strain gradient elasticity, which is a high order perturbation of the linearized
elastic system. Our elements are H$^2-$nonconforming while H$^1-$conforming. We show both
elements converge in the energy norm uniformly with respect to the perturbation parameter.
\keywords{Strain gradient elasticity \and nonconforming finite elements \and uniform error estimates}
\subclass{MSC Primary 65N30, 65N15; Secondary 74K20}
\end{abstract}
\section{Introduction}
Strain gradient theory, which introduces the high order strain and microscopic parameter into the strain energy density, is one of the most successful approach to characterize the strong size effect of the heterogeneous materials~\cite{MaClarke:1995}. The origin of this theory can be traced back to Cosserat brothers' celebrated work~\cite {Cosserat:1909}. %Toupin~\cite {Toupin:1962}, Mindlin and Tieresten~\cite{Mindlin:1962} and Koiter %~\cite{Koiter:19641,Koiter:19642} extended Cosserats' theory to a generalized couple stress theory by incorporating the %rotation gradient into the constitutive relations.
Further development is related to Mindlin's work on microstructure in linear elasticity~\cite{Mindlin:1964,MindlinEshel:1968}. However, Mindlin's theory is less attractive in practice because it contains too many parameters. Based on Mindlin's work, Aifantis at al~\cite{Altan:1992,Ru:1993} proposed a linear strain gradient elastic model with only one microscopic parameter. This simplified strain gradient theory successfully eliminated the strain singularity of the brittle crack tip field~\cite{Exadaktylos:1996}.

The strain gradient elastic model of Aifantis' is a perturbed elliptic system of fourth order from the view point of mathematics. To discrtize this model by a finite element method, a natural choice is C$^1$ finite elements such as Argyris triangle~\cite{ArgyrisFriedScharpf:1968} and Bell's triangle~\cite{Bell:1969} because this model contains the gradient of strain. We refer to~\cite{Zerovs:2001,zerovs:20091,Zerovs:20092,Akarapu:2006} for works in this direction. Alternative approach such as mixed finite element has been employed to solve this model~\cite{Amanatidou:2002}. A drawback of both the conforming finite element method and mixed finite element method is that the number of the degrees of freedom is extremely large and high order
polynomial has to be used in the basis function, which is more pronounced for three dimensional problems; See e.g.,
the finite element for three-dimensional strain gradient model proposed in~\cite{Zerovs:20092}
contains $192$ degrees of freedom for the local finite element space.

A common approach to avoid such difficult is to use the nonconforming finite element. For scalar version of such problem, there are a lot of work since the original contribution~\cite{Tai:2001}, and we refer to~\cite{Brenner:2011,GuzmanLeykekhmanNeilan:2012,Xie:2013} and the references therein for recent progress. The situation is different for the strain gradient elastic model. The well-posedness of the corresponding boundary value problem hinges on a Korn-like inequality, which will be dubbed as H$^2$-Korn's inequality. Therefore, a discrete H$^2$-Korn's inequality has to be satisfied for any reasonable nonconforming finite element approximation. We prove a H$^2$-Korn's inequality for piecewise vector fields as {\sc Brenner's} seminal H$^1$ Korn's inequality~\cite{Brenner:2004}. Motivated by this discrete Korn's inequality, we propose two nonconforming H$^2-$finite elements, which are H$^1$-conforming. Both elements satisfy the discrete H$^2$-Korn's inequality. We prove that both elements converge in energy norm uniformly with respect to the small perturbation parameter. Numerical results also confirm the theoretic results.

It is worth mentioning that {\sc Soh and Chen}~\cite{Chen:2004} constructed several noncomforming finite elements for this strain gradient elastic model. Some of them exhibited excellent numerical performance. Their motivation is the so-called $C^{\,0-1}$ patch test, which is obviously different from ours. It is unclear whether their
elements are robust with respect to the small perturbation parameter, which may be an interesting topic for
further study. As to various numerical methods based on reformulations of the strain gradient elastic model, we refer to~\cite{Askes:2002,Wei:2006} and the references therein.

The remaining part of the paper is organized as follows. In the next part, we introduce the linear strain gradient elasticity and prove the well-posedness of its Dirichlet boundary value problem by establishing a H$^2$-Korn inequality. A discrete H$^2$-Korn inequality is proved in Section~\ref{sec:fem}, and two finite elements are constructed and analyzed in this part. The numerical results can be found in Section~\ref{sec:numer}.

Throughout this paper, the generic constant $C$ may differ at different occurrences, while it is independent of the microscopic parameter $\iota$ and the mesh size $h$.
\section{The Korn's inequality of strain gradient elasticity}\label{sec:linearsgt}
The space $L^2(\Om)$ of the square-integrable
functions defined on a bounded polygon $\Om$ is equipped with the inner product $(\cdot,\cdot)$ and the norm $\nm{\cdot}{L^2(\Om)}$. Let $H^m(\Om)$ be the
standard Sobolev space~\cite{AdamsFournier:2003} and
 \[
 \nm{v}{H^m(\Om)}=\sum_{k=0}^m\abs{v}_{H^k(\Om)}^2\quad
 \text{and}\quad\abs{v}_{H^k(\Om)}^2=\int_{\Om}\sum_{\abs{\al}=k}\abs{\na^\al v}^2\dx,
 \]
 where $\al=(\al_1,\al_2)$ is a multi-index whose components $\al_i$ are nonnegative integers, $\abs{\al}=\al_1+\al_2$ and $\na^\al=\pa^{\abs{\al}}/\pa x_1^{\al_1}\pa x_2^{\al_2}$. We may drop $\Om$ in the Sobolev norm $\nm{\cdot}{H^m(\Om)}$
 when there is no confusion may occur. The space $H_0^m(\Om)$ is the closure in $H^m(\Om)$ of $C_0^\infty(\Om)$. In particular,
 \begin{align*}
 H_0^1(\Om){:}&=\set{v\in H^1(\Om)}{v=0\text{\;on\;}\pa\Om},\\
 H_0^2(\Om){:}&=\set{v\in H^1(\Om)}{v=\pa_n v=0\text{\;on\;}\pa\Om},
 \end{align*}
 where $\pa_n v$ is the normal derivative of $v$. Equally, $\pa_t v$ denotes the tangential derivative of $v$.
 The summation convention is used for repeated indices. A comma followed by a subscript, say $i$, denotes partial differentiation with respect to the spatial variables $x_i$, i.e., $v_{,i}=\pa v/\pa x_i$.

For any vector-valued function $v$, its gradient is a matrix-valued
function with components $(\na v)_{ij}=\pa v_i/\pa x_j$. The symmetric part of a gradient field is also a matrix-valued function defined by
\[
\eps(v)=\dfrac12(\na v+[\na v]^T).
\]
The anti-symmetric part of a gradient field is defined as \(\na^{\text{a}}v=\na v-\eps(v)\).
The divergence operator applying to a vector field is defined as the trace of $\na v$, i.e., $\divop v=\trop{\na v}=\pa v_i/\pa x_i$.
The Sobolev spaces $[H^m(\Om)]^2, [H_0^m(\Om)]^2$ and  $[L^2(\Om)]^2$ of a vector field can be defined
in a similar manner as their scalar counterparts, this rule equally applies to their inner products and their norms. For the $m$-th order tensors $A$ and $B$, we define the inner product as $A:B=\sum_{i_1,\cdots,i_m}A_{i_1}B_{i_1}
\cdots A_{i_m}B_{i_m}$.
\subsection{Strain gradient elastic model and H$^2-$Korn inequality}
The strain gradient elastic model in~\cite{Altan:1992,Ru:1993,Exadaktylos:1996} is described by the following boundary
value problem: For $u$ the displacement vector that solves
\begin{equation}\label{eq:sgbvp}
\left\{
\begin{aligned}
(\iota^2\triangle-I)\Lr{\mu\triangle u+(\lambda+\mu)\na\divop u}&=f,\quad&&\text{in\;}\Om,\\
u=\pa_n u&=0,\quad&&\text{on\;}\pa\Om.
\end{aligned}\right.
\end{equation}
Here $\lam$ and $\mu$ are the Lam\'e constants, and $\iota$ is the microscopic
parameter such that $0<\iota\le 1$. In particular, we are interested in the regime when $\iota$ is close
to zero. The above boundary value problem may be rewritten into the following variational problem: Find $u\in[H^2_0(\Om)]^2$ such that
\begin{equation}\label{variation:eq}
a(u,v)=(f,v)\quad\text{for all\quad} v\in [H_0^2(\Om)]^2,
\end{equation}
where
\[
a(u,v)=(\C\eps(u),\eps(v))+(\D\na\eps(u),\na\eps(v)),
\]
and the fourth-order tensors $\C$ and the sixth-order tensor $\D$ are defined by
\[
\C_{ijkl}=\lam\de_{ij}\de_{kl}+2\mu\de_{ik}\de_{jl}\quad\text{and}\quad
\D_{ijklmn}=\iota^2\Lr{\lam\de_{il}\de_{jk}\de_{mn}+2\mu\de_{il}\de_{jm}\de_{ln}},
\]
respectively. Here $\delta_{ij}$ is the Kronecker delta function. The third-order tensor $\na\eps(v)$ is defined as $(\na\eps(v))_{ijk}=\eps_{jk,i}$. We only consider the clamped boundary condition in this paper, the discussion on
other boundary conditions can be found in~\cite{Altan:1992,Ru:1993,Exadaktylos:1996}.

The variational problem~\eqref{variation:eq} is well-posed if and only if the bilinear form $a(\cdot,\cdot)$ is coercive over $[H_0^2(\Om)]^2$.
\begin{theorem}\label{thm:korn}
For any $v\in [H_0^2(\Om)]^2$, there holds
\begin{equation}\label{Korn}
C(\Om)\Lr{\nm{v}{H^1}^2+\iota^2\abs{v}_{H^2}^2}\le a(v,v)\leq 2(\lam+\mu)\Lr{\nm{v}{H^1}^2+\iota^2\abs{v}_{H^2}^2},
\end{equation}
where $C(\Om)$ denpends only on $\mu$ and the constant $C_p$ in the following {\em Poincar\'e} inequality,
\[
\nm{v}{L^2}\le C_p\nm{\na v}{L^2}.
\]
\end{theorem}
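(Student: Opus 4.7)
The plan is to establish both bounds by reducing $a(v,v)$ to the two non-negative pieces $2\mu\|\varepsilon(v)\|_{L^2}^2$ and $2\iota^2\mu\|\nabla\varepsilon(v)\|_{L^2}^2$ and then invoking the classical Korn's first inequality on $v$ and on each first-order derivative of $v$.

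For the upper bound I would just expand $a(v,v)$ in the definitions of $\mathbb{C}$ and $\mathbb{D}$, which yields
\[
a(v,v)=\lambda\nm{\divop v}{L^2}^2+2\mu\nm{\eps(v)}{L^2}^2+\iota^2\lambda\nm{\na\divop v}{L^2}^2+2\iota^2\mu\nm{\na\eps(v)}{L^2}^2.
\]
The pointwise bounds $(\divop v)^2\le2|\na v|^2$ and $|\eps(v)|^2\le|\na v|^2$, applied to $v$ and to its first derivatives, and the obvious inequality $\nm{\na v}{L^2}^2\le\nm{v}{H^1}^2$ then give the constant $2(\lambda+\mu)$ on the right.

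For the lower bound I would discard the $\lambda$ terms (nonnegative since $\lambda+\mu\geq\mu>0$ in the physical regime, and the two $\lambda$ contributions are manifestly nonnegative regardless) and keep only $2\mu\nm{\eps(v)}{L^2}^2+2\iota^2\mu\nm{\na\eps(v)}{L^2}^2$. Using the integration-by-parts identity $2\nm{\eps(v)}{L^2}^2=\nm{\na v}{L^2}^2+\nm{\divop v}{L^2}^2$, valid for $v\in[H_0^1(\Om)]^2$ (boundary terms vanish by the trace), yields $\nm{\na v}{L^2}^2\le2\nm{\eps(v)}{L^2}^2$. Combining with Poincar\'e,
\[
\nm{v}{H^1}^2=\nm{v}{L^2}^2+\nm{\na v}{L^2}^2\le(1+C_p^2)\nm{\na v}{L^2}^2\le2(1+C_p^2)\nm{\eps(v)}{L^2}^2.
\]
Because $v\in[H_0^2(\Om)]^2$ implies each partial derivative $\pa_i v\in[H_0^1(\Om)]^2$, the same identity applied to $\pa_i v$ gives $\nm{\na(\pa_iv)}{L^2}^2\le2\nm{\eps(\pa_iv)}{L^2}^2$; since $\eps(\pa_iv)_{jk}=(\na\eps(v))_{ijk}$, summing over $i$ yields $|v|_{H^2}^2\le2\nm{\na\eps(v)}{L^2}^2$. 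Putting the two pieces together produces the coercivity constant $C(\Om)=\mu/(1+C_p^2)$ (or the minimum of this and $\mu$).

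The main obstacle, though quite mild here, is the well-known subtlety of Korn's first inequality: it holds in the form $\nm{\na v}{L^2}\le\sqrt{2}\nm{\eps(v)}{L^2}$ only because of the homogeneous Dirichlet trace, which is what allows the integration-by-parts identity to close without boundary contributions. The same remark explains why the argument extends to the second-derivative piece: the clamped condition $v=\pa_n v=0$ on $\pa\Om$ forces $\pa_iv\in[H_0^1(\Om)]^2$, so that Korn's inequality applies once more to each $\pa_iv$, exactly producing control of the full $H^2$ seminorm by $\nm{\na\eps(v)}{L^2}$. No further regularity of $\Om$ is needed beyond polygonality, and the constants depend only on $\mu$ and $C_p$, as stated.
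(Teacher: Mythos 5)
Your proposal is correct and follows essentially the same route as the paper: expand $a(v,v)$, bound the $\lambda$-terms crudely for the upper bound, and for the lower bound apply the first Korn inequality (your integration-by-parts identity $2\nm{\eps(v)}{L^2}^2=\nm{\na v}{L^2}^2+\nm{\divop v}{L^2}^2$ is exactly the identity the paper uses) both to $v$ and to each $\pa_i v\in[H_0^1(\Om)]^2$, then finish with Poincar\'e, yielding a constant depending only on $\mu$ and $C_p$.
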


The proof of this theorem essentially depends on the first {\rm Korn's} inequality~\cite{Korn:1908,Korn:1909}. For any $v\in [H_0^1(\Om)]^2$, there holds
\begin{equation}\label{eq:1stkorn}
2\nm{\eps(v)}{L^2}^2\ge\nm{\na v}{L^2}^2.
\end{equation}
The proof of this inequality follows from the following identity
\[
\abs{\eps(v)}^2-\abs{\na^{\text{a}}v}^2=\abs{\divop v}^2+\divop[(v\cdot\na)v-v(\divop v)]
\]
with the usual notation
\[
v\cdot\na=\sum_{i=1}^2v_i\diff{}{x_i}.
\]
Indeed, by the fact $v=0$ on $\pa\Om$, the above identity and divergence theorem imply
\[
\int_{\Om}\abs{\na^{\text{a}}v}^2\dx=\int_{\Om}\abs{\eps(v)}^2\dx-\int_{\Om}\abs{\divop v}^2\dx\le\int_{\Om}\abs{\eps(v)}^2\dx,
\]
which implies the first Korn's inequality~\eqref{eq:1stkorn} by using
the algebraic identity
\[
\abs{\na v}^2=\abs{\eps(v)}^2+\abs{\na^{\text{a}}v}^2.
\]
\vskip .5cm
\noindent
{\em Proof of Theorem~\ref{thm:korn}\;}
By definition, we write
\[
a(v,v)=2\mu\nm{\eps(v)}{L^2}^2+\lam\nm{\divop v}{L^2}^2+\iota^2\Lr{2\mu\nm{\na\eps(v)}{L^2}^2+\lam\nm{\na\divop v}{L^2}^2}.
\]
The upper bound in~\eqref{Korn} immediately follows by noting
\[
\nm{\eps(v)}{L^2}^2\le\nm{\na v}{L^2}^2,\quad\text{and}\quad
\nm{\divop v}{L^2}^2\le 2\nm{\na v}{L^2}^2.
\]

For any $v\in[H_0^2(\Om)]^2$, we have $\pa_i v\in[H_0^1(\Om)]^2$ for $i=1,2$, we apply the first Korn's inequality~\eqref{eq:1stkorn}
to the vector field $\pa_i v$ and obtain
\[
2\nm{\eps(\pa_i v)}{L^2}^2\ge\nm{\na\pa_i v}{L^2}^2.
\]
Using the fact that the strain operator $\eps$ and the partial gradient operator $\pa_i$ commute, we rewrite the above inequality as
\[
2\nm{\na\eps(v)}{L^2}^2=2\sum_{i=1}^2\nm{\pa_i\eps(v)}{L^2}^2
=2\sum_{i=1}^2\nm{\eps(\pa_i v)}{L^2}^2\ge\sum_{i=1}^2\nm{\na\pa_i v}{L^2}^2=\nm{\na^2 v}{L^2}^2.
\]
Therefore,
\[
a(v,v)\ge\mu\Lr{\abs{v}_{H^1}^2+\iota^2\abs{v}_{H^2}^2},
\]
which together with the {\em Poincar\'e} inequality leads to
the lower bound in~\eqref{Korn}.
\qed

Proceeding along the same line in~\cite[\S 5]{Tai:2001}, we may prove the following regularity results
for the solution of Problem~\eqref{variation:eq}.
\begin{lemma}\label{lemma:reg}
There exists $C$ that may depend on $\Om$ but independent of $\iota$ such that
\begin{equation}\label{eq:regularity}
\abs{u}_{H^2}+\iota\abs{u}_{H^3}\le C\iota^{-1/2}\nm{f}{L^2},
\end{equation}
and
\begin{equation}\label{eq:limiterr}
\nm{u-u^0}{H^1}\le C\iota^{1/2}\nm{f}{L^2},
\end{equation}
where $u^0\in [H_0^1(\Om)]^2$ satisfies
\begin{equation}\label{eq:limit}
(\C\eps(u^0),\eps(v))=(f,v)\qquad \text{for all\quad}v\in[H_0^1(\Om)]^2.
\end{equation}
\end{lemma}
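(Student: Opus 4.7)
The plan is to adapt Tai's singular-perturbation analysis~\cite{Tai:2001} for the scalar fourth-order problem, using the H$^2$-Korn inequality of Theorem~\ref{thm:korn} in place of the free scalar coercivity available in that setting. The first step is the basic energy estimate: testing~\eqref{variation:eq} with $v=u$ and invoking the lower bound in~\eqref{Korn} together with $(f,u)\le C_p\nm{f}{L^2}\nm{\na u}{L^2}$ yields $\nm{u}{H^1}+\iota\abs{u}_{H^2}\le C\nm{f}{L^2}$. This already settles the $\nm{u}{H^1}$ contribution uniformly in $\iota$, but only gives $\abs{u}_{H^2}\le C\iota^{-1}\nm{f}{L^2}$, which is a half-power of $\iota$ worse than~\eqref{eq:regularity}; the missing factor must be recovered through comparison with the limit problem~\eqref{eq:limit}.

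For~\eqref{eq:limiterr} I would argue as follows. Classical regularity for the Lam\'e system on the polygon $\Om$ gives $u^0\in[H^2(\Om)]^2$ with $\nm{u^0}{H^2}\le C\nm{f}{L^2}$, but $u^0\notin[H_0^2(\Om)]^2$ in general because $\pa_n u^0$ does not vanish on $\pa\Om$. I would construct a boundary-layer corrector $w^\iota\in[H_0^2(\Om)]^2$, concentrated in an $\iota$-neighborhood of $\pa\Om$, satisfying $\pa_n w^\iota|_{\pa\Om}=-\pa_n u^0|_{\pa\Om}$ together with the sharp scalings
\[
\nm{w^\iota}{H^1}\le C\iota^{1/2}\nm{u^0}{H^2},\qquad \iota\abs{w^\iota}_{H^2}\le C\iota^{1/2}\nm{u^0}{H^2}.
\]
Concretely, $w^\iota$ is obtained by extending $\pa_n u^0|_{\pa\Om}$ inward against an exponential profile $e^{-d/\iota}$ in the normal distance $d$, with a smooth partition of unity handling the corners. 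Setting $\phi:=u^0+w^\iota\in[H_0^2(\Om)]^2$, the difference $u-\phi$ is an admissible test function for~\eqref{variation:eq}. Using the limit identity $(\C\eps(u^0),\eps(v))=(f,v)$, valid for every $v\in[H_0^1(\Om)]^2\supset[H_0^2(\Om)]^2$, to cancel the load term, I arrive at
\[
a(u-\phi,u-\phi)=-(\C\eps(w^\iota),\eps(u-\phi))-(\D\na\eps(\phi),\na\eps(u-\phi)).
\]
Applying Theorem~\ref{thm:korn} on the left, Cauchy--Schwarz together with Young's inequality on the right, and inserting the corrector estimates with $\abs{\phi}_{H^2}\le\abs{u^0}_{H^2}+\abs{w^\iota}_{H^2}\le C\iota^{-1/2}\nm{f}{L^2}$, I deduce $\nm{u-\phi}{H^1}+\iota\abs{u-\phi}_{H^2}\le C\iota^{1/2}\nm{f}{L^2}$. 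The triangle inequality then delivers~\eqref{eq:limiterr}, while the same bound combined with $\abs{\phi}_{H^2}\le C\iota^{-1/2}\nm{f}{L^2}$ produces the $\abs{u}_{H^2}$ half of~\eqref{eq:regularity}.

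For the remaining $\iota\abs{u}_{H^3}$ piece I would bootstrap one further derivative by differentiating the variational problem in tangential directions and re-running the energy machinery on the derived unknown; the normal component of the third derivative is recovered algebraically from the strong form~\eqref{eq:sgbvp}, with the auxiliary field $g:=\mu\triangle u+(\lam+\mu)\na\divop u$ solving the screened Poisson equation $(\iota^2\triangle-I)g=f$ and supplying the needed extra smoothness. The main obstacle of the whole proof is the construction and sharp analysis of the vector-valued corrector $w^\iota$: it must realize the correct normal trace, stay inside $[H_0^2(\Om)]^2$, carry the sharp $\iota^{1/2}$ scaling in $H^1$ and $\iota^{-1/2}$ scaling in $H^2$ simultaneously, and behave well near the corners of the polygon where $u^0$ may have reduced regularity. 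Once $w^\iota$ is in hand, the remaining estimates are essentially bookkeeping with Theorem~\ref{thm:korn}.
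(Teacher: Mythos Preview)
The paper does not give its own proof of this lemma: it simply states the result and refers the reader to~\cite[\S 5]{Tai:2001}, saying one may ``proceed along the same line.'' Your proposal is exactly that adaptation of Tai's boundary-layer/corrector argument, with Theorem~\ref{thm:korn} supplying the coercivity that the scalar problem gets for free, so the approach matches what the paper has in mind.

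One technical caveat worth flagging: you invoke full $H^2$ regularity of the Lam\'e solution $u^0$ on a polygon, and later a tangential-differentiation bootstrap for the $H^3$ bound on $u$. Both steps are unproblematic on a convex polygon (which is the setting of the numerical experiments), but on a polygon with reentrant corners the Lam\'e system and the fourth-order operator develop corner singularities that weaken the available regularity; the construction and scaling of the corrector $w^\iota$ near corners, which you already identify as the main obstacle, then becomes genuinely delicate. The paper sidesteps this by citing~\cite{Tai:2001} wholesale, so your sketch is at the same level of detail as the reference, but if you were to write out a full proof you would need either a convexity hypothesis or a careful corner analysis.
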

\section{The nonconforming finite elements}~\label{sec:fem}
In this part, we introduce two nonconforming finite elements
to approximate the variational problem~\eqref{variation:eq}.
Let $\mc{T}_h$ be a triangulation of $\Omega$ with maximum mesh size $h$. We assume all elements in $\mc{T}_h$ is shape-regular in the sense of Ciarlet and Raviart~\cite{Ciarlet:1978}. Denote the set of all the edges in $\mc{T}_h$ as $\mc{S}(\Omega,\mc{T}_h)$. The space of piecewise $[H^m(\Om,\mc{T}_h)]^2$ vector fields is defined by
\[
[H^m(\Omega,\mc{T}_h)]^2{:}=\set{v\in [L^2(\Omega)]^2}{v|_T\in[H^m(T)]^2,\quad\forall T\in\mc{T}_h},
\]
which is equipped with the broken norm
\[
\nm{v}{H_h^k}{:}=\nm{v}{L^2}+\sum_{k=1}^m\nm{\na^k_h v}{L^2},
\]
where
\[
\nm{\na^k_h v}{L^2}^2=\sum_{T\in\mc{T}_h}\nm{\na^k v}{L^2(T)}^2
\]
with $(\na^k_h v)|_T=(\na^k v)|_T$. Moreover, $\eps_h(v)=(\na_h v+[\na_h v]^T)/2$.

{\sc Brenner}~\cite{Brenner:2004} established a discrete Korn inequality for any piecewise $H^1$ vector fields with weak linear continuity across the common surface between
two adjacent elements, i.e., for any $v\in [H^1(\Om,\mc{T}_h)]^d$ with $d=2,3$ satisfying
\[
\int_{e}\jump{v}\cdot p\dtau=0,\quad p\in[P_1(e)]^d,e\in\mc{S}(\Om,\mc{T}_h).
\]
There exists a constant $C$ depends on $\Om$ and $\mc{T}_h$ but independent of $h$ such that
\begin{equation}\label{eq:h1korn}
\nm{v}{H_h^1}\le C\Lr{\nm{v}{L^2}+\nm{\eps_h(v)}{L^2}}.
\end{equation}
Here $[P_1(e)]^d$ is the linear vector field over $e$ and $\jump{v}$ denotes the jump of $v$ across $e$ with $e$ an edge for $d=2$ and a face for $d=3$. This inequality is fundamental to the well-posedness of the discrete problems arising from nonconforming finite element and discontinuous Galerkin method approximation of the linearized elasticity model and Reissner-Mindlin plate model; See~\cite{Falk:1991},~\cite{MingShi:2001} and~\cite{HansboLarson:2002}.

{\sc Mardal and Winther}~\cite{Winther:2006} improved the above inequality by replacing $[P_1(e)]^d$ by its subspace $[P_{1,-}(e)]^d$ given by
\[
[P_{1,-}(e)]^d{:}=\set{v\in [P_1(e)]^d}{v\cdot t\in\text{RM}(e)},
\]
where $t$ is the tangential vector of edge $e$, and $\text{RM}(e)$ is the infinitesimal rigid motion on $e$. In fact, they have proved
\begin{equation}\label{eq:discretekorn}
\abs{v}_{H_h^1}^2\le C\Lr{\|\eps_h(v)\|_{L^2}^2+\nm{v}{L^2}^2
+\sum_{e\in\mc{S}(\Omega,\mc{T}_h)}h_{e}^{-1}\nm{\jump{\Pi_{e}v}}{L^2(e)}^2},
\end{equation}
where $\Pi_{e}:[L^2(e)]^d\mapsto [P_{1,-}(e)]^d$ is the $L^2$ projection.

Our result is an H$^2$ analog of the discrete Korn's inequality~\eqref{eq:discretekorn}.
\begin{theorem}\label{thm:hkorn}
For any $v\in[H^2(\Omega,\mc{T}_h)]^2$, there exits $C$ that depends on $\Om$ and $\mc{T}_h$ but independent of $h$ such that
\begin{equation}\label{discrete_Korn}
\begin{aligned}
\nm{v}{H_h^2}^2&\le C\bigg(\nm{\na_h\eps_h(v)}{L^2}^2+\nm{\eps_h(v)}{L^2}^2+\nm{v}{L^2}^2
+\sum_{e\in\mc{S}(\Omega,\mc{T}_h)}h_{e}^{-1}\nm{\jump{\Pi_{e}v}}{L^2(e)}^2\\
&\qquad\quad+\sum_{i=1}^2\sum_{e\in\mc{S}(\Omega,\mc{T}_h)}h_{e}^{-1}\nm{\jump{\Pi_{e}(\pa_i v)}}{L^2(e)}^2\bigg).
\end{aligned}
\end{equation}
\end{theorem}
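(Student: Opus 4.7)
The plan is to bootstrap the Mardal--Winther inequality \eqref{eq:discretekorn} from the $H^1_h$ level to the $H^2_h$ level by applying it twice: once to the field $v$ itself, and once to each of the partial derivatives $\pa_i v$, $i=1,2$. Exactly the same commutativity $\eps(\pa_i v)=\pa_i\eps(v)$ that was used at the continuous level in the proof of Theorem~\ref{thm:korn} will now convert ``gradient of strain of $\pa_i v$'' into ``gradient of strain of $v$'' on each element, which is the quantity appearing on the right-hand side of \eqref{discrete_Korn}.

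More precisely, I would proceed as follows. First, I apply \eqref{eq:discretekorn} directly to $v\in[H^2(\Om,\mc{T}_h)]^2\subset[H^1(\Om,\mc{T}_h)]^2$ to obtain
\[
|v|_{H_h^1}^2\le C\Bigl(\nm{\eps_h(v)}{L^2}^2+\nm{v}{L^2}^2+\sum_{e\in\mc{S}(\Om,\mc{T}_h)}h_e^{-1}\nm{\jump{\Pi_e v}}{L^2(e)}^2\Bigr),
\]
which controls all of $\nm{v}{H_h^1}$ by the right-hand side of \eqref{discrete_Korn}. Next, for each fixed $i\in\{1,2\}$, the vector field $\pa_i v$ belongs to $[H^1(\Om,\mc{T}_h)]^2$, so \eqref{eq:discretekorn} applies to it as well and gives
\[
|\pa_i v|_{H_h^1}^2\le C\Bigl(\nm{\eps_h(\pa_i v)}{L^2}^2+\nm{\pa_i v}{L^2}^2+\sum_{e\in\mc{S}(\Om,\mc{T}_h)}h_e^{-1}\nm{\jump{\Pi_e(\pa_i v)}}{L^2(e)}^2\Bigr).
\]

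Now I invoke the element-wise identity $\eps_h(\pa_i v)=\pa_i\eps_h(v)$, which yields
\[
\sum_{i=1}^2\nm{\eps_h(\pa_i v)}{L^2}^2=\sum_{i=1}^2\nm{\pa_i\eps_h(v)}{L^2}^2=\nm{\na_h\eps_h(v)}{L^2}^2.
\]
Summing over $i=1,2$ then gives $|v|_{H_h^2}^2=\sum_{i=1}^2|\pa_i v|_{H_h^1}^2$ bounded by $\nm{\na_h\eps_h(v)}{L^2}^2+|v|_{H_h^1}^2$ plus the jump terms of $\Pi_e(\pa_i v)$, and combining with the first step absorbs $|v|_{H_h^1}^2$ into the right-hand side of \eqref{discrete_Korn}, completing the proof after adding $\nm{v}{L^2}^2$ to pass from the semi-norms to the full norm $\nm{v}{H_h^2}$.

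I do not anticipate a genuinely hard step here: since \eqref{eq:discretekorn} is stated for arbitrary piecewise $H^1$ vector fields (the failure of interelement continuity being quantified by the jump terms on the right), there is no continuity hypothesis to verify before applying it to $\pa_i v$. The only point that deserves care is the bookkeeping in the constants and the observation that $\nm{\pa_i v}{L^2}^2$ on the right-hand side of the second application is itself part of $|v|_{H_h^1}^2$ and hence already controlled by the first application; the whole argument is therefore essentially a linear combination of two copies of Mardal--Winther glued together by the commutation $\eps\circ\pa_i=\pa_i\circ\eps$, mirroring the continuous argument in Theorem~\ref{thm:korn}.
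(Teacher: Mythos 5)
Your proposal is correct and is essentially the paper's own argument: the paper likewise applies the Mardal--Winther inequality \eqref{eq:discretekorn} to each $\pa_i v$, uses the element-wise commutation $\eps_h(\pa_i v)=\pa_i\eps_h(v)$ to produce $\nm{\na_h\eps_h(v)}{L^2}^2$, and then invokes \eqref{eq:discretekorn} a second time (applied to $v$ itself) to absorb the remaining $\nm{\na_h v}{L^2}^2$ term. The only difference is the order of the two applications, which is immaterial.
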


The proof follows essentially the same line that leads to Theorem~\ref{thm:korn}.
\begin{proof}
For any $v\in[H^2(\Omega,\mc{T}_h)]^2$, it is clear that $\pa_i v\in[H^1(\Om,\mc{T}_h)]^2$ for $i=1,2$.
Applying the discrete Korn's inequality~\eqref{eq:discretekorn} to each $\pa_i v$, we obtain
\begin{align*}
\abs{v}_{H_h^2}^2&=\abs{\pa_1v}_{H_h^1}^2+\abs{\pa_2v}_{H_h^1}^2\\
&\le C\sum_{i=1}^2\Lr{\nm{\eps_h(\pa_iv)}{L^2}^2+\nm{\pa_iv}{L^2}^2
+\sum_{e\in\mc{S}(\Omega,\mc{T}_h)}h_{e}^{-1}\nm{\jump{\Pi_{e}(\pa_iv)}}{L^2(e)}^2}\\
&=C\Lr{\nm{\na_h\eps_h(v)}{L^2}^2+\nm{\na_h v}{L^2}^2
+\sum_{i=1}^2\sum_{e\in\mc{S}(\Om,\mc{T}_h)}h_{e}^{-1}\nm{\jump{\Pi_{e}(\pa_i v)}}{L^2(e)}^2}.
\end{align*}
Invoking~\eqref{eq:discretekorn} once again, we get~\eqref{discrete_Korn}.\qed
\end{proof}

Motivated by the discrete Korn's inequality~\eqref{discrete_Korn}, we construct two new finite elements
that are H$^1-$conforming but H$^2-$nonconforming elements. For such elements, the continuity of the tangential derivatives are automatically satisfied, and we only need to deal with the weak continuity of the normal derivative. The finite element space is defined as
\[
V_h{:}=\set{v\in [H_0^1(\Om)]^2}{v|_T\in W(T)\;\text{for all}\;T\in\mc{T}_h}.
\]
We shall specify two local finite element spaces $W(T)$ in the next two parts.

Given $V_h$, we find $u_h\in V_h$ such that
\begin{equation}\label{eq:discvara}
a_h(u_h,v)=(f,v)\quad\text{for all\quad} v\in V_h,
\end{equation}
where the bilinear form $a_h$ is defined for any $v,w\in V_h$ as
\[
a_h(v,w){:}=(\C\eps(v),\eps(w))+(\D\na_h\eps(v),\na_h\eps(w)),
\]
where the second term is defined in a piecewise manner as
\[
(\D\na_h\eps(v),\na_h\eps(w)){:}=\sum_{T\in\mc{T}_h}\int_T\D\na\eps(v)\na\eps(w)\dx.
\]
\subsection{The first nonconforming element}
Define
\begin{equation}\label{element:1}
W(T){:}=[P_2(T)]^2\oplus bP_2^\ast(T),
\end{equation}
where $P_2(T)$ is the quadratic Lagrange element, and $b=\lam_1\lam_2\lam_3$ is the cubic bubble function, and $P_2^\ast(T)\subset[P_2(T)]^2$ is defined as
\[
P_2^\ast(T){:}=\set{v\in[P_2(T)]^2}{v\cdot n|_e\in P_1(e)\quad\text{for all\quad}e\in\pa T}.
\]

Next lemma gives the degrees of freedom of this element, which is graphically shown in Figure~\ref{element}, and we prove
that the degrees of freedom is $W(T)-$unisolvent.
\begin{figure}
\centering
\includegraphics[width=6cm, height=5cm]{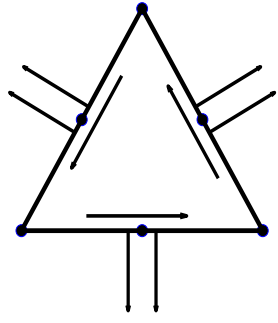}
\caption{The degrees of freedom are point evaluations at the vertex and midpoint of each edge, are the moments of the normal derivative of the normal component against $P_1$ over each edge, are the moments of the normal
derivative of the tangential component along each edge.}\label{element}
\end{figure}
\begin{lemma}\label{lem:unisolvent}
The dimension of $W(T)$ is $21$. Any $w\in W(T)$ is uniquely determined by the following degrees of freedom:
\begin{enumerate}
\item The values of $w$ at the corners and edge midpoints;
\item The moments $\int_e\pa_n (w\cdot t)\dtau$ and $\int_e\pa_n (w\cdot n) \tau^k\dtau$ for $k=0,1$
and for all $e\in\pa T$.
\end{enumerate}
\end{lemma}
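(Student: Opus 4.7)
The plan is to verify $\dim W(T)=21$ and then establish unisolvence by showing that simultaneous vanishing of all $21$ listed functionals forces $w\equiv 0$. For the dimension, $\dim[P_2(T)]^2=12$. On each edge the restriction $v\cdot n_l|_{e_l}\in P_1(e_l)$ is a single linear condition (killing the quadratic coefficient of $v\cdot n_l$ on $e_l$), and the three such conditions are independent, so $\dim P_2^*(T)=9$. Multiplication by the nonzero cubic $b$ is injective on polynomials, giving $\dim bP_2^*(T)=9$, and the sum $[P_2(T)]^2+bP_2^*(T)$ is direct: any element of the intersection is a $[P_2]^2$ vector field vanishing on $\pa T$, and each of its components is a degree-two polynomial vanishing on three non-concurrent lines, hence zero. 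Therefore $\dim W(T)=21$, matching the $6+6+9=21$ listed functionals.

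For unisolvence, decompose $w=p+bq$ with $p\in[P_2(T)]^2$ and $q\in P_2^*(T)$ and assume every listed functional vanishes on $w$. Since $b|_{\pa T}=0$, the vertex and edge-midpoint values of $w$ agree with those of $p$. On each edge $e$, the two endpoint values and the midpoint value supply six scalar data that determine both $P_2$ components of $p|_e$, so $p|_{\pa T}=0$, and by the same three-lines argument $p\equiv 0$. Thus $w=bq$.

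It remains to kill $q$ using the nine moment functionals. On edge $e_l$, $b=0$ yields $\pa_n(bq)|_{e_l}=(\pa_n\lam_l)\,\phi_{e_l}\,q|_{e_l}$, where $\phi_{e_l}=\lam_j\lam_k|_{e_l}$ is the strictly positive edge bubble and $\pa_n\lam_l$ is a nonzero constant. The moment conditions thus reduce to
\[
\int_{e_l}\phi_{e_l}(q\cdot n_l)\tau^k\dtau=0\quad(k=0,1),\qquad \int_{e_l}\phi_{e_l}(q\cdot t_l)\dtau=0.
\]
The key observation is that $q\cdot n_l|_{e_l}\in P_1(e_l)$ by the defining constraint of $P_2^*(T)$, so the two weighted moments against $\{1,\tau\}$ constitute a non-degenerate pairing on $P_1(e_l)$ (the weight $\phi_{e_l}$ is strictly positive), forcing $q\cdot n_l|_{e_l}=0$ on every edge. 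Then $q$ must vanish at each vertex (two linearly independent normal constraints meet there), and hence $q\cdot t_l|_{e_l}\in P_2(e_l)$ vanishes at both endpoints, so equals $c_l\phi_{e_l}$ for some scalar $c_l$; the remaining tangential moment yields $c_l\int_{e_l}\phi_{e_l}^2\dtau=0$, so $c_l=0$. Consequently $q|_{\pa T}=0$ and $q\equiv 0$.

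The main delicacy I anticipate is the matching between the defining restriction of $P_2^*(T)$ and the asymmetric $1$-tangential/$2$-normal split of moment DOFs on each edge: precisely the constraint $v\cdot n_l|_{e_l}\in P_1(e_l)$ cuts the normal trace from a three- to a two-dimensional space, so that the two normal moments become injective on it, while the single tangential moment then suffices to pin down the remaining edge-bubble multiple. Without this matching the overall count would still be $21=21$, but the scheme would be singular.
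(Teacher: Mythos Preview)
Your proof is correct and follows the same overall strategy as the paper: establish $\dim W(T)=21$ via the direct-sum decomposition, reduce to the bubble part $w=bq$ using that $b|_{\partial T}=0$, exploit the defining constraint $q\cdot n_l|_{e_l}\in P_1(e_l)$ together with the two weighted normal moments (positive weight $\phi_{e_l}$) to force $q\cdot n_l|_{e_l}=0$, and then kill the tangential trace with the remaining moment. The only genuine difference is in this last step. You observe geometrically that the normal-trace vanishing forces $q$ to vanish at every vertex, so $q\cdot t_l|_{e_l}=c_l\phi_{e_l}$, and the weighted tangential moment gives $c_l\int_{e_l}\phi_{e_l}^2\dtau=0$. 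The paper instead expands $q$ in barycentric monomials $\lam_1^{\al_1}\lam_2^{\al_2}\lam_3^{\al_3}$ and argues coefficient by coefficient: the normal-trace vanishing kills the pure-square coefficients ($\abs{\al}=2$ with some $\al_i=2$), and then the tangential moment kills the mixed ones. Your route is a bit more transparent and sidesteps a small expository wrinkle in the paper, where the unweighted identity~\eqref{eq:1sttangential} is stated before the pure-square coefficients have been eliminated; both arguments, however, rest on the same matching you correctly identify between the $1$-tangential/$2$-normal split of the moment degrees of freedom and the edge constraint defining $P_2^\ast(T)$.
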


\begin{proof}
Since $[P_2(T)]^2\cap b P^{\ast}_2(T)=\{0\}$ and $\mr{dim}\;P^{\ast}_2(T)\geq9$, we conclude $\mr{dim}\;W(T)\ge 21$.
It suffices to show that a function $w\in W(T)$ vanishes if all the degrees of freedom
are zeros. Note that $w|_e\in [P_2(e)]^2$, with three roots
on edge $e$, then we must have $w|_{\pa T}=0$. Therefore, we may write $w=bp$ with $p\in P_2^\ast(T)$.
Let $e$ be a fixed edge of $T$, and denote $b=\lam_e\lam_+\lam_{-}$ with $\lam_e$ the barycentric coordinate functions
such that $\lam_e\equiv 0$ on $e$, while $\lam_+$ and $\lam_{-}$ the remaining two barycentric coordinate functions. Furthermore, $(\na w)|_e=(p\lam_+\lam_{-})|_e\na\lam_e$. Note that $\lam_+\lam_{-}\pa_n\lam_e$
is strictly negative in the interior of $e$. Therefore, the condition
\[
0=\int_e\pa_n(w\cdot n)\tau^k\dtau=\int_e\lam_+\lam_{-}\diff{\lam_e}{n}p\cdot n\tau^k\dtau
\]
implies that for any $e\in\pa T$ and $k=0,1$,
\begin{equation}\label{eq:1stnormalconstraint}
\int_ep\cdot n\tau^k\dtau=0.
\end{equation}
Proceeding along the same line, we obtain, for any $e\in\pa T$,
\begin{equation}\label{eq:1sttangential}
\int_ep\cdot t\dtau=0.
\end{equation}
Furthermore, using the fact that $p\cdot n\in P_1(e)$ and~\eqref{eq:1stnormalconstraint}, we conclude
that $p\cdot n\equiv 0$ over $\pa T$.

Assume that
\[
p=\Lr{\sum_{\abs{\al}=2}a_{\al}\lam_1^{\al_1}\lam_2^{\al_2}\lam_3^{\al_3},\sum_{\abs{\al}=2}b_{\al}\lam_1^{\al_1}\lam_2^{\al_2}\lam_3^{\al_3}}^T,
\]
where $\al=(\al_1,\al_2,\al_3)$ whose components $\al_i$ are nonnegative integers and $\abs{\al}=\al_1+\al_2+\al_3$.

Using $(p\cdot n_i)|_{e_i}\equiv 0$, we obtain, for $j=1,2,3$,
\begin{equation}\label{eq:orth1}
(a_{\al},b_{\al})\cdot n_j=0\quad\text{with\quad}\al_j=0.
\end{equation}
This means that, for each $\al$ with one component equals to $2$, the vector $(a_{\al},b_{\al})$ is orthogonal to the normal directions of two different edges, which immediately implies that such vector $(a_{\al},b_{\al})$ must be zero.

Next, using the fact that $\int_{e_i}(p\cdot t)\dtau\equiv 0$, we obtain, for $j=1,2,3$,
\[
(a_{\al},b_{\al})\cdot t_j=0,\quad\text{with\quad} \al_j=0,\al_k=1\quad\text{for\quad}k\not=j.
\]
Invoking~\eqref{eq:orth1} once again, we conclude that, for each $\al$ with only one zero component, $(a_{\al},b_{\al})$ is orthogonal to
both the normal direction and the tangential direction of the edge indexed with the zero component of $\al$, which must vanish identically. Therefore, all $a_{\al}$ and $b_{\al}$ are zeros, and hence $p\equiv 0$, equivalently, $w\equiv 0$. This completes the proof.\qed
\end{proof}

Using the degrees of freedom given in Lemma~\ref{lem:unisolvent}, we may define a local interpolation operator
$\Pi_T:H^2(T)\mapsto W(T)$. The next lemma shows that this operator locally preserves quadratics.
\begin{lemma}\label{lem:interpolate}
\begin{equation}\label{eq:p2invariance}
\Pi_T v=v,\quad v\in [P_2(T)]^2.
\end{equation}
\end{lemma}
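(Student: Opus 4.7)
The statement is almost immediate from the unisolvence result (Lemma~\ref{lem:unisolvent}) together with the direct-sum structure of $W(T)$, so my plan is to keep the proof very short and structural rather than computational.

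First, I would recall that by definition $W(T)=[P_2(T)]^2\oplus bP_2^\ast(T)$, so in particular $[P_2(T)]^2\subset W(T)$. Hence for any $v\in[P_2(T)]^2$, the interpolant $\Pi_T v$ and $v$ itself both live in the finite dimensional space $W(T)$.

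Second, I would invoke the degrees of freedom listed in Lemma~\ref{lem:unisolvent}. By construction, $\Pi_T v$ is the unique element of $W(T)$ that agrees with $v$ on every degree of freedom (point evaluations at the three vertices and three edge midpoints, together with the edge moments of the normal derivatives of the normal and tangential components). Since $v$ itself is an element of $W(T)$ that trivially agrees with its own degrees of freedom, unisolvence forces $\Pi_T v=v$.

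The only \textbf{potential obstacle} is verifying that the degrees of freedom actually make sense and are well-defined for an arbitrary $v\in[P_2(T)]^2$: the point values are obvious, and since $v\in [H^2(T)]^2$, the edge moments $\int_e \partial_n(v\cdot t)\,\mathrm{d}\tau$ and $\int_e \partial_n(v\cdot n)\tau^k\,\mathrm{d}\tau$ are well-defined. So the quoted invariance~\eqref{eq:p2invariance} is genuinely a one-line consequence of the direct-sum decomposition of $W(T)$ combined with the unisolvence already established. No new calculation is needed; in particular, one does not have to analyze the bubble component $bP_2^\ast(T)$, since the decomposition is already direct.
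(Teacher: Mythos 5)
Your proof is correct: since $\Pi_T$ is the canonical interpolation operator determined by the degrees of freedom of Lemma~\ref{lem:unisolvent}, and since $[P_2(T)]^2\subset W(T)$ by the definition~\eqref{element:1}, any $v\in[P_2(T)]^2$ is itself an element of $W(T)$ matching its own degrees of freedom, so unisolvence gives $\Pi_T v=v$; your remark that the degrees of freedom are well defined on $[H^2(T)]^2$ covers the only point that needs checking. The paper, however, takes a more roundabout, constructive route: it splits the degrees of freedom into the nodal set $\{\ldof_i\}_{i=1}^{12}$ and the moment set $\{\mdof_j\}_{j=1}^{9}$, builds from the natural bases $\{\phi_i\}$ of $[P_2(T)]^2$ and $\{\psi_j\}$ of $bP_2^\ast(T)$ the corrected functions $\varphi_i=\phi_i-\sum_j\mdof_j(\phi_i)\psi_j$, verifies that $\{\varphi_i,\psi_j\}$ is the dual basis of $W(T)$ with respect to the full set of degrees of freedom, and then checks by direct expansion that the bubble contributions cancel when $v\in[P_2(T)]^2$. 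Mathematically both arguments rest on the same ingredients (unisolvence plus the direct-sum structure of $W(T)$), and your version is the shorter and more transparent one; what the paper's longer computation buys is the explicit construction of the nodal basis functions of the element, which the authors point out immediately after the proof is useful for implementation, whereas your abstract projection argument yields the invariance without producing the basis.
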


\begin{proof}
Let $\lr{T,W(T),\Sigma(T)}$ be the finite element triple with $\Sigma(T)$ the degrees of freedom. By construction, $\Sigma(T)$ takes the form
as
\[
\Sigma(T)=\{\ldof_1,\cdots,\ldof_{12},\mdof_1,\cdots,
\mdof_9\},
\]
where $\{\ldof_i\}_{i=1}^{12}$ are the nodal type degrees of freedom, and $\{\mdof_i\}_{i=1}^9$ are the moment type
degrees of freedom. The basis functions for $[P_2(T)]^2$ and $bP^\ast_2(T)$ are denoted by $\{\phi_i\}_{i=1}^{12}$ and $\{\psi_i\}_{i=1}^9$, respectively.

Define a new set of basis functions
\[
\varphi_i=\phi_i-\sum_{j=1}^9\mdof_j(\phi_i)\psi_j,\quad i=1,\cdots,12.
\]
We claim
\begin{equation}\label{eq:claim}
W(T)=\mr{span}\{\varphi_1,\cdots,\varphi_{12},\psi_1,\cdots,\psi_9\}.
\end{equation}
Note that
$\ldof_i(\psi_j)\equiv0$ because $\psi_j=0$ on $\pa T$. We obtain $\{\psi_j\}_{j=1}^9$ are the basis functions of $W(T)$ associated with the degrees of freedom $\{\mdof_j\}_{j=1}^9$. For any $\varphi_i$, there holds
\[
\ldof_j(\varphi_i)=\ldof_j(\phi_i)
-\sum_{k=1}^9\mdof_k(\phi_i)\ldof_j(\psi_k)=\ldof_j(\phi_i)=\delta_{ij},
\]
and
\begin{align*}
\mdof_j(\varphi_i)&=\mdof_j(\phi_i)-\sum_{k=1}^9\mdof_k(\phi_i)\mdof_j(\psi_k)\\
&=\mdof_j(\phi_i)-\sum_{k=1}^9\mdof_k(\phi_i)\delta_{jk}=0.
\end{align*}
This verifies the claim~\eqref{eq:claim}.

Next, we prove the interpolation operator is locally $P_2$ invariant. For any $v\in [P_2(T)]^2$, we have the representation
\[
v=\sum_{i=1}^{12}\ldof_i(v)\phi_i.
\]
By definition,
\begin{align*}
\Pi_T v&=\sum_{i=1}^{12}\ldof_i(v)\varphi_i+\sum_{j=1}^9\mdof_j(v)\psi_j\\
&=\sum_{i=1}^{12}\ldof_i(v)\phi_i-\sum_{j=1}^9
\sum_{i=1}^{12}\ldof_i(v)\mdof_j(\phi_i)\psi_j
+\sum_{j=1}^9\mdof_j(v)\psi_j\\
&=\sum_{i=1}^{12}\ldof_i(v)\phi_i-\sum_{j=1}^9\Lr{\sum_{i=1}^{12}
\ldof_i(v)\mdof_j(\phi_i)-\mdof_j(v)}\psi_j\\
&=\sum_{i=1}^{12}\ldof_i(v)\phi_i=v,
\end{align*}
where we have used the identity
\[
\mdof_j(v)=\mdof_j\Lr{\sum_{i=1}^{12}\ldof_i(v)\phi_i}
=\sum_{i=1}^{12}\ldof_i(v)\mdof_j(\phi_i).
\]
This completes the proof.
\qed
\end{proof}

The above proof actually provides a constructive way to derive the basis function of this element.
\subsection{The second nonconforming element}
The second element is almost the same with the first one except that $P_2^\ast$ is replaced by
\[
P_3^{\ast}(T)=\set{w\in[P_3(T)]^2}{\divop w\in P_0(T),w\cdot n|_{e}\in P_1(e)\;\text{for all\;} e\in\pa T}.
\]
Hence,
\begin{equation}\label{element:2}
W(T)=[P_2(T)]^2\oplus bP_3^{\ast}(T).
\end{equation}
Here $P_3^{\ast}(T)$ has appeared in~\cite{MardalTaiWinther:2002} to solve Darcy-Stokes flow.

The following lemma gives the degrees of freedom of this element, and we prove it is $W(T)-$unisolvent.
\begin{lemma}\label{lem:2ndunisolvency}
The dimension of $W(T)$ is $21$. Any $w\in W(T)$ is uniquely determined by the following degrees of freedom:
\begin{enumerate}
\item The values of $w$ at the corners and edge midpoints;
\item The moments $\int_e\pa_n (w\cdot t)\dtau$ and $\int_e\pa_n (w\cdot n) \tau^k\dtau$ for $k=0,1$
and for all $e\in\pa T$.
\end{enumerate}
\end{lemma}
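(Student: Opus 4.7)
The plan is to mirror the argument of Lemma~\ref{lem:unisolvent}, adding one structural ingredient: the divergence constraint built into $P_3^*(T)$. First I would verify the dimension count. Starting from $\dim[P_3(T)]^2=20$, the constraint $\divop w\in P_0(T)$ imposes $5$ independent conditions (since $\divop:[P_3(T)]^2\to P_2(T)$ is onto), and the constraint $w\cdot n|_e\in P_1(e)$ on each of the three edges imposes $2$ more; the resulting $\dim P_3^*(T)=9$ is the one recorded in~\cite{MardalTaiWinther:2002}. Since $b$ vanishes on $\pa T$ while nontrivial elements of $[P_2(T)]^2$ do not, the sum is direct, so $\dim W(T)=21$ matches the number of degrees of freedom, and it suffices to show $w\equiv 0$ whenever every prescribed functional vanishes on $w$.

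Next I would reduce to the bubble part. Writing $w=v+bq$ with $v\in[P_2(T)]^2$ and $q\in P_3^*(T)$, the fact that $b|_{\pa T}=0$ gives $w|_e=v|_e\in[P_2(e)]^2$, and the three nodal values per edge force $w|_e=0$. Hence $w=bq$. Setting $b=\lam_e\lam_+\lam_-$ and repeating the normal-derivative calculation of Lemma~\ref{lem:unisolvent}, the prescribed moment conditions translate into
\[
\int_e q\cdot n\,\tau^k\dtau=0,\quad k=0,1,\qquad\int_e q\cdot t\dtau=0,\qquad\text{for every edge }e\subset\pa T.
\]
Since $q\cdot n|_e\in P_1(e)$, the two normal moments force $q\cdot n\equiv 0$ on $\pa T$.

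The main obstacle is that $q\cdot t|_e$ is now cubic in general, so the single tangential moment per edge cannot kill it pointwise, and the coefficient-chasing argument used for $P_2^*(T)$ no longer closes. The remedy is to exploit the divergence constraint. Because $q\cdot n=0$ on $\pa T$ and $\divop q$ is constant, the divergence theorem gives $|T|\divop q=\int_{\pa T}q\cdot n\dtau=0$, hence $\divop q\equiv 0$. On the simply connected triangle this yields a stream function $\phi\in P_4(T)$ with $q=\curl\phi$, and the identity $q\cdot n=\pa_t\phi$ on $\pa T$ shows that $\phi$ is constant on $\pa T$; normalizing the constant to zero gives $\phi|_{\pa T}=0$, whence $\phi=br$ for some $r\in P_1(T)$, a three-parameter family.

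Finally I would close the argument using the three remaining tangential moments. Via $q\cdot t=-\pa_n\phi$, each becomes $\int_{e_i}\lam_j\lam_k\,r\dtau=0$ for $i=1,2,3$; expanding $r=c_1\lam_1+c_2\lam_2+c_3\lam_3$ in barycentric coordinates and evaluating the standard one-dimensional simplex integrals reduces the system to $c_j+c_k=0$ for each edge, whose unique solution is $c_1=c_2=c_3=0$. Hence $r=0$, $\phi=0$, $q=0$ and therefore $w=0$, completing the unisolvency proof.
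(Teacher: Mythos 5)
Your proof is correct and follows essentially the same route as the paper's: reduce to the bubble part $w=bq$, use the $P_1$ normal trace together with the two normal moments to get $q\cdot n|_{\pa T}=0$, apply the divergence theorem with $\divop q\in P_0(T)$ to conclude $q=\curl\phi$ with $\phi=br$, $r\in P_1(T)$, and then kill $r$ with the three tangential moments. The only deviations are cosmetic: you add an explicit dimension count for $P_3^{\ast}(T)$, and you finish by evaluating the barycentric edge moments to get the system $c_j+c_k=0$, whereas the paper argues that the single-signed weight $\lam_+\lam_-\pa_n\lam_e$ forces $r$ to vanish at an interior point of each edge, which no nonzero affine function can do.
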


The proof of this result is slightly different from the direct proof of Lemma~\ref{lem:unisolvent}.
\begin{proof}
Proceeding along the same line that leads to~\eqref{eq:1stnormalconstraint} and~\eqref{eq:1sttangential}, we obtain
that for all $e\in\pa T$,
\begin{equation}\label{eq:2ndconstraints}
\int_ep\cdot n\tau^k\dtau=0,k=0,1,\quad\text{and}\quad
\int_ep\cdot t\dtau=0.
\end{equation}
Using the fact that $(p\cdot n)|_{\pa T}\in P_1$ and the first identity of~\eqref{eq:2ndconstraints}, we conclude that $(p\cdot n)|_{\pa T}\equiv 0$, which immediately implies
\[
\int_T\divop p\dx=\int_{\pa T}p\cdot n\dtau=0.
\]
Since $\divop p\in P_0(T)$, this implies that $p$ is divergence free. Then there exists a polynomial $\phi\in P_4(T)$ such that $p=\curl\phi$. Furthermore, since
\[
\pa_t\phi|_{\pa T}=p\cdot n|_{\pa T}=0,
\]
which implies that $\phi$ is constant along the edge of $T$. Without loss of generality, we assume that $\phi|_{\pa T}\equiv0$. Hence, $\phi$ is of the form $\phi=b\,\varphi$ with $\varphi\in P_1(T)$. By the second
identity of~\eqref{eq:2ndconstraints}, we obtain
\[
\int_{e}\pa_n\phi\,\dtau=\int_ep\cdot t\,\dtau=0.
\]
Note that $\pa_n\phi|_e=\lam_+\lam_{-}\pa_n\lam_e\,\varphi|_e$ and the fact that
$\lam_+\lam_{-}\pa_n\lam_e$ is strictly negative in the interior of the edge $e$, we conclude that $\varphi$ has a root
at each edge, which together with the fact that $\varphi\in P_1(T)$ yields $\varphi\equiv0$, or equivalently $w=0$. This
completes the proof.\qed
\end{proof}

Proceeding along the same line that leads to~\eqref{eq:p2invariance}, we conclude that this
nonconforming element also locally preserves quadratics.

\begin{remark}
Both elements are endowed with the same degrees of freedom. The structure of the local finite element spaces
for both element are similar. In fact, the bubble functions can be removed by standard static condensation procedure.
Therefore, the resulting method has only $12$ degrees of freedom on each element.
\end{remark}

For any function $v\in V_h$, we obtain a simplified version of the discrete Korn's inequality~\eqref{discrete_Korn} without all the jump terms, which may be regarded as a H$^2-$ analog of the discrete Korn's inequality~\eqref{eq:h1korn}.
\begin{lemma}\label{coro:korn}
There exists $C$ depends on $\Om$ and $\mc{T}_h$, but independent of $h$ such that
\begin{equation}\label{eq:vkorn}
\nm{v}{H_h^2}\le C\Lr{\nm{\na_h\eps(v)}{L^2}+\nm{\eps(v)}{L^2}}.
\end{equation}
\end{lemma}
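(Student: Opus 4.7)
The plan is to apply the discrete $H^2$-Korn inequality of Theorem~\ref{thm:hkorn} to $v\in V_h$ and show that the three terms on the right-hand side of~\eqref{discrete_Korn} beyond $\nm{\na_h\eps(v)}{L^2}^2$ all reduce to the quantities appearing on the right of~\eqref{eq:vkorn}. For the $\nm{v}{L^2}$ term, the inclusion $V_h\subset[H_0^1(\Om)]^2$ lets the continuous first Korn inequality~\eqref{eq:1stkorn} combined with Poincar\'e give $\nm{v}{L^2}\le C\nm{\eps(v)}{L^2}$. For the first jump sum, $v$ is continuous across every interior edge and vanishes on $\pa\Om$, so $\jump{v}\equiv 0$ on every $e\in\mc{S}(\Om,\mc{T}_h)$ and the whole sum is zero.

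The heart of the argument is the second jump sum $\sum_{i=1}^2\sum_{e}h_e^{-1}\nm{\jump{\Pi_e(\pa_i v)}}{L^2(e)}^2$. Fix an interior edge $e=T_1\cap T_2$ with unit tangent $t$ and unit normal $n$. Because $v|_e$ is single-valued we have $\jump{\pa_t v}=0$, and the decomposition $\pa_i=t_i\pa_t+n_i\pa_n$ then reduces the task to showing $\Pi_e(\jump{\pa_n v})=0$. Splitting the vector $\jump{\pa_n v}$ along $t$ and $n$, and using that $t,n$ are constant and orthonormal on a straight edge, the $L^2$-projection onto $[P_{1,-}(e)]^2=P_0(e)\,t\oplus P_1(e)\,n$ decouples into the scalar $L^2$-projection of $\jump{\pa_n v\cdot t}$ onto $P_0(e)$ and of $\jump{\pa_n v\cdot n}$ onto $P_1(e)$. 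These two scalar projections are determined precisely by the three moments $\int_e\pa_n(v\cdot t)\dtau$, $\int_e\pa_n(v\cdot n)\dtau$ and $\int_e\pa_n(v\cdot n)\tau\dtau$, which are exactly the edge degrees of freedom listed in Lemma~\ref{lem:unisolvent} (and in Lemma~\ref{lem:2ndunisolvency} for the second element). Because they are single-valued across $e$ for any $v\in V_h$, their jumps vanish and hence $\Pi_e(\jump{\pa_n v})=0$. On a boundary edge the same three moments are zero under the usual imposition of the clamped data in $V_h$, so the conclusion persists.

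Substituting these three facts back into~\eqref{discrete_Korn} immediately produces~\eqref{eq:vkorn}. The step I anticipate as most delicate is the clean identification of the abstract projection $\Pi_e$ onto the vector space $[P_{1,-}(e)]^2$ with the three scalar moment functionals that serve as edge degrees of freedom of the element; once the tangent/normal decoupling of $\Pi_e$ is verified, the remainder of the proof is a direct substitution into Theorem~\ref{thm:hkorn}.
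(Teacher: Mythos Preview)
Your proposal is correct and follows essentially the same route as the paper's own proof: apply Theorem~\ref{thm:hkorn}, kill the first jump sum via $V_h\subset[H_0^1(\Om)]^2$, kill the second jump sum by decomposing $\pa_i v$ into tangential and normal parts and then identifying $\Pi_e$ on $[P_{1,-}(e)]^2=P_0(e)\,t\oplus P_1(e)\,n$ with the three edge moments that serve as degrees of freedom, and finally absorb $\nm{v}{L^2}$ into $\nm{\eps(v)}{L^2}$ via Poincar\'e and the first Korn inequality. The only cosmetic differences are that the paper writes $\jump{\Pi_e(\pa_n v)}$ where you write $\Pi_e(\jump{\pa_n v})$ (equivalent by linearity) and that you spell out the boundary-edge case explicitly, which the paper leaves implicit.
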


\begin{proof}
For any function $v\in V_h$, we claim that the jump terms in the right-hand side
of~\eqref{discrete_Korn} vanish. Indeed, $\jump{\Pi_e v}=0$ for any $e\in\mc{S}(\Om,\mc{T}_h)$
because $v\in [H_0^1(\Om)]^2$. It remains to verify that for all $e\in\mc{S}(\Om,\mc{T}_h)$ and $i=1,2$,
\begin{equation}\label{eq:zeronormaljump}
\jump{\Pi_e(\pa_iv)}=0.
\end{equation}
We write $\pa_iv=\al_i\pa_nv+\beta_i\pa_tv$, where $\al_i$ and $\beta_i$ are constants. Hence, it remains to show
\[
\jump{\Pi_e(\pa_nv)}=0,\qquad \jump{\Pi_e(\pa_tv)}=0,\quad\forall e\in\mc{S}(\Om,\mc{T}_h).
\]
Since $V_h$ is H$^1-$ conforming, it is clear that $\jump{\Pi_e(\pa_tv)}=0$.

For any $e\in\mc{S}(\Om,\mc{T}_h)$, it is clear that $\mathrm{RM}(e)=P_0(e)$. For any $w\in [P_{1,-}(e)]^2$, there holds
\[
w=w_n n+w_t t,\quad w_n\in P_1(e),w_t\in P_0(e).
\]
Hence, $\jump{\Pi_e(\pa_nv)}=0$, if and only if
\[
\int_e\jump{\pa_n(v\cdot n)}\tau^k\dtau=0,k=0,1,\quad\text{and}\quad
\int_e\jump{\pa_n(v\cdot t)}\dtau=0.
\]
This is true for any $v\in W(T)$ and we prove the claim~\eqref{eq:zeronormaljump}.

For any $v\in V_h$, it follows from~\eqref{discrete_Korn} that
\[
\nm{v}{H_h^2}\le C\Lr{\nm{\na_h\eps(v)}{L^2}+\nm{\eps(v)}{L^2}+\nm{v}{L^2}}.
\]
The inequality~\eqref{eq:vkorn} follows by using the Poincar\'e's inequality and the first
Korn's inequality~\eqref{eq:1stkorn}:
\[
\nm{v}{L^2}^2\le C_p^2\nm{\na v}{L^2}^2\le 2C_p^2\nm{\eps(v)}{L^2}^2.
\]
\qed
\end{proof}

We are ready to prove the coercivity of the bilinear form $a_h$ over $V_h$.
\begin{theorem}\label{th:discoer}
For any $\iota<1/\sqrt2$, there exists $C$ that depends on the domain $\Om$ and the shape regularity of the triangulation $\mc{T}_h$ such that
\begin{equation}\label{eq:coercivity}
a_h(v,v)\ge C(\iota^2\nm{v}{H_h^2}^2+\nm{v}{H^1}^2),\qquad\forall v\in V_h.
\end{equation}
\end{theorem}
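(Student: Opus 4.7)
The plan is to combine the classical first Korn inequality~\eqref{eq:1stkorn} (available because $V_h\subset [H_0^1(\Om)]^2$) with the discrete H$^2-$Korn inequality established in Lemma~\ref{coro:korn}. Expanding the bilinear form exactly as in the proof of Theorem~\ref{thm:korn} and discarding the two nonnegative $\lambda$--terms yields the pointwise lower bound
\[
a_h(v,v)\ge 2\mu\nm{\eps(v)}{L^2}^2+2\mu\iota^2\nm{\na_h\eps(v)}{L^2}^2,
\]
so it suffices to bound $\nm{v}{H^1}^2$ and $\iota^2\nm{v}{H_h^2}^2$ separately by this right-hand side.

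The $H^1$ piece is essentially free: since every $v\in V_h$ lies in $[H_0^1(\Om)]^2$, the classical Korn inequality~\eqref{eq:1stkorn} and \emph{Poincar\'e}'s inequality apply verbatim, giving $\nm{v}{H^1}^2\le C\nm{\eps(v)}{L^2}^2\le (C/(2\mu))\,a_h(v,v)$. For the second piece I would square Lemma~\ref{coro:korn} and multiply by $\iota^2$ to obtain
\[
\iota^2\nm{v}{H_h^2}^2\le C\iota^2\nm{\na_h\eps(v)}{L^2}^2+C\iota^2\nm{\eps(v)}{L^2}^2.
\]
The first term on the right is absorbed by $2\mu\iota^2\nm{\na_h\eps(v)}{L^2}^2\le a_h(v,v)$. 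The second term is the only one that needs attention: invoking the hypothesis $\iota<1/\sqrt2$, so that $\iota^2<1/2$, I can bound $C\iota^2\nm{\eps(v)}{L^2}^2$ by $(C/2)\nm{\eps(v)}{L^2}^2$, which in turn is controlled by $a_h(v,v)/(2\mu)\cdot C/2$. Adding the two bounds yields~\eqref{eq:coercivity} with a constant depending only on $\Om$, the shape regularity of $\mc{T}_h$, and the Lam\'e constants.

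I do not foresee any genuine obstacle; the heavy lifting has already been done in Theorem~\ref{thm:hkorn} and Lemma~\ref{coro:korn}. The only mildly delicate point is the absorption of the $\iota^2\nm{\eps(v)}{L^2}^2$ cross-term produced by the discrete Korn estimate, whose coefficient must stay strictly below the $2\mu$ coercivity coming from the leading symmetric-gradient term in $a_h$. This absorption is exactly where the smallness hypothesis $\iota<1/\sqrt2$ is used, and it also explains why the resulting constant is independent of $\iota$.
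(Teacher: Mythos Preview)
Your proposal is correct and follows essentially the same route as the paper: drop the $\lambda$--terms, control the $H^1$ part via the first Korn inequality~\eqref{eq:1stkorn} together with Poincar\'e, and control the $\iota^2\nm{v}{H_h^2}^2$ part via Lemma~\ref{coro:korn}. The paper arranges these ingredients as a single chain of inequalities, splitting $2\mu\nm{\eps(v)}{L^2}^2$ into $2\mu\iota^2\nm{\eps(v)}{L^2}^2+\mu\nm{\eps(v)}{L^2}^2$ (this is precisely where $\iota^2\le 1/2$ enters), whereas you bound each target term separately by $a_h(v,v)$; the logic and the use of the smallness hypothesis are the same.
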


\begin{proof}
For any $v\in V_h$, using~\eqref{eq:vkorn}, we obtain, there exists $C$ that is independent of $h$ such that
\begin{align*}
a_h(v,v)&\ge 2\mu\Lr{\iota^2\nm{\na_h\eps(v)}{L^2}^2+\nm{\eps(v)}{L^2}^2}\\
&\ge 2\mu\iota^2\Lr{\nm{\na_h\eps(v)}{L^2}^2+\nm{\eps(v)}{L^2}^2}
+\mu\nm{\eps(v)}{L^2}^2\\
&\ge C\Lr{\iota^2\nm{v}{H_h^2}^2+\nm{v}{H^1}^2},
\end{align*}
which implies~\eqref{eq:coercivity}, where we have used
\[
\nm{v}{H^1}^2=\nm{v}{L^2}^2+\nm{\na v}{L^2}^2\le(C_p^2+1)\nm{\na v}{L^2}^2\le 2(C_p^2+1)\nm{\eps(v)}{L^2}^2,
\]
in the last step.\qed
\end{proof}

The following interpolate estimate is a direct consequence of the quadratics invariance of
the local finite element spaces $W(T)$; The proof is standard, and we refer to~\cite{CiarletRaviart:1972} for the details.
\begin{lemma}\label{lem:interest}
There exists $C$ independent of $h$ such that for all $v\in[H^k(T)]^2$,
\begin{equation}\label{eq:interest}
\nm{v-\Pi_Tv}{H^j(T)}\le C h^{k-j}\abs{v}_{H^k(T)},
\quad j=0,1,2,k=2,3.
\end{equation}
\end{lemma}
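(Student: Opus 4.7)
The plan is to reduce to the reference element via an affine map and then combine the Bramble-Hilbert lemma with the $P_2$-invariance established in Lemma~\ref{lem:interpolate} (and its analogue for the second element). Fix a reference triangle $\hat T$, and for each $T\in\mc{T}_h$ let $F_T:\hat T\to T$ be the affine mapping. Pull back the local finite element triple $(T,W(T),\Sigma(T))$ to $(\hat T,W(\hat T),\hat\Sigma)$ in the usual way, and let $\hat\Pi:H^2(\hat T)\to W(\hat T)$ be the associated interpolation operator. The standard scaling identities, together with the shape-regularity assumption on $\mc{T}_h$, will then convert the estimate on $\hat T$ into~\eqref{eq:interest} on $T$ with constant $C$ depending only on $\hat T$ and the shape-regularity constant.

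The first step is to show that $\hat\Pi$ is a bounded linear operator from $H^2(\hat T)$ into $W(\hat T)$ (equipped with any fixed norm, e.g.\ $\nm{\cdot}{H^2(\hat T)}$). For this it suffices to check that each degree of freedom in $\hat\Sigma$ is a bounded linear functional on $[H^2(\hat T)]^2$. The point evaluations at vertices and edge midpoints are bounded on $H^2(\hat T)$ by the Sobolev embedding $H^2(\hat T)\hookrightarrow C^0(\hat T)$ in two dimensions. The edge moments $\int_e \pa_n(w\cdot n)\tau^k\dtau$ and $\int_e \pa_n(w\cdot t)\dtau$ involve first derivatives of $w$ on an edge, which are controlled by the trace theorem applied to $\na w\in[H^1(\hat T)]^{2\times 2}$, hence by $\nm{w}{H^2(\hat T)}$. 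Consequently $\nm{\hat\Pi \hat v}{H^2(\hat T)}\le C\nm{\hat v}{H^2(\hat T)}$ for all $\hat v\in[H^2(\hat T)]^2$.

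Since $\hat\Pi$ fixes $[P_2(\hat T)]^2$ (this is the content of Lemma~\ref{lem:interpolate} on $\hat T$, and the analogous argument in Lemma~\ref{lem:2ndunisolvency} for the second element), for every $\hat q\in[P_2(\hat T)]^2$ and $\hat v\in[H^k(\hat T)]^2$ with $k\in\{2,3\}$,
\[
\nm{\hat v-\hat\Pi\hat v}{H^j(\hat T)}=\nm{(I-\hat\Pi)(\hat v-\hat q)}{H^j(\hat T)}\le C\nm{\hat v-\hat q}{H^2(\hat T)}.
\]
Choosing $\hat q$ to be the averaged Taylor polynomial of degree $\le 2$ of $\hat v$ and applying the Bramble-Hilbert lemma yields
\[
\nm{\hat v-\hat\Pi\hat v}{H^j(\hat T)}\le C\abs{\hat v}_{H^k(\hat T)},\qquad j=0,1,2,\;k=2,3.
\]
Pushing this forward through $F_T$ with the standard scaling $\abs{\hat v}_{H^m(\hat T)}\simeq h^{m-1}\abs{v}_{H^m(T)}$ (modulo the Jacobian factor $\abs{\det F_T'}^{1/2}$ that cancels on both sides) then produces~\eqref{eq:interest}.

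The one nontrivial point is checking the boundedness of the degrees of freedom on $H^2(\hat T)$; once that is in hand, the rest of the proof is the routine Ciarlet-Raviart machinery, to which we simply refer the reader.
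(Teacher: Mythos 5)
Your overall strategy (use the $P_2$-invariance of $\Pi_T$ from Lemma~\ref{lem:interpolate}, Bramble--Hilbert, and scaling) is exactly the standard Ciarlet--Raviart machinery that the paper invokes without proof, but there is a genuine gap in the reduction step: this element is \emph{not} affine-equivalent, so there is no single reference operator $\hat\Pi$ for which the scaling identities convert an estimate on $\hat T$ into~\eqref{eq:interest} on $T$. Both the constraint defining $P_2^\ast(T)$ (and $P_3^\ast(T)$) and the moment degrees of freedom $\int_e\pa_n(w\cdot n)\tau^k\dtau$, $\int_e\pa_n(w\cdot t)\dtau$ involve the edge normals and tangents of the physical triangle, and these are not preserved by the affine map $F_T$ (nor does the Piola transform rescue the situation, since it ruins the point-value degrees of freedom). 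Concretely: if $\hat\Pi$ denotes the element defined intrinsically on $\hat T$ with its own normals, then $(\Pi_T v)\circ F_T\neq\hat\Pi(v\circ F_T)$ in general, so the commuting relation your push-forward step silently uses is false; if instead you pull back the triple $(T,W(T),\Sigma(T))$, then both the space and the functionals on $\hat T$ depend on $T$, and the operator bound $\nm{\hat\Pi_T\hat v}{H^2(\hat T)}\le C\nm{\hat v}{H^2(\hat T)}$ must be shown to hold \emph{uniformly} over all shape-regular triangles --- this is the actual crux, and it is not addressed: the boundedness you checked (Sobolev embedding and trace theorem for the reference normals and tangents) concerns the wrong functionals, and it ignores the $T$-dependent factors (of order $\abs{e}\sim h$, $\|B_T^{-1}\|\sim h^{-1}$, $\tau^k\sim h^k$) that the physical moments acquire under pullback and that must be balanced against the dual basis of $\hat W_T$.

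The statement is of course true and the repair is standard: either treat $\{(T,W(T),\Sigma(T))\}$ as an almost-affine family in Ciarlet's sense and verify that the pulled-back interpolation operators are uniformly bounded on $H^2(\hat T)$ under shape regularity (tracking the geometric factors above), or bypass the reference element entirely and estimate $v-\Pi_Tv$ directly on $T$ using scaled trace and Sobolev inequalities together with the local $P_2$-invariance. Note that the paper itself offers no proof here --- it only cites Ciarlet--Raviart, with Lemma~\ref{lem:interpolate} supplying the element-specific ingredient --- so your identification of the $P_2$-invariance as the key input is correct; what is missing is the justification that the scaling argument survives the normal/tangent dependence of the degrees of freedom. (Also, for the second element the quadratic invariance is the remark following Lemma~\ref{lem:2ndunisolvency}, not that lemma itself, and your scaling relation $\abs{\hat v}_{H^m(\hat T)}\simeq h^{m-1}\abs{v}_{H^m(T)}$ already has the Jacobian factor absorbed, so it does not additionally ``cancel on both sides''; these are minor.)
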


A global interpolation operator $I_h:H^k(\Om)\mapsto V_h$ is defined by $(I_h)|_T=\Pi_T$.
\subsection{Convergence analysis}
We are ready to prove the main result of this paper.
\begin{theorem}\label{thm:error}
Assume that the weak solution of $u$ of the problem~\eqref{variation:eq} belongs to $[H_0^2(\Om)]^2\cap[H^3(\Om)]^2$. Let $u_h$ be the solution of~\eqref{eq:discvara}. Then there exists $C$
independent of $\iota$ and $h$ such that
\begin{equation}\label{eq:finalerr}
\wnm{u-u_h}\le\left\{
\begin{aligned}
&C(h^2+\iota h)\abs{u}_{H^3},\\
&Ch(\abs{u}_{H^2}+\iota\abs{u}_{H^3}),
\end{aligned}\right.
\end{equation}
where $\wnm{v}^2{:}=a_h(v,v)$ for any $v\in V_h$.
\end{theorem}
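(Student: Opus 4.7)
The plan is to apply the second Strang lemma. By the discrete coercivity in Theorem~\ref{th:discoer}, there exists $C$ independent of $\iota$ and $h$ such that
\[
\wnm{u-u_h}\le C\inf_{v_h\in V_h}\wnm{u-v_h}+C\sup_{0\ne w\in V_h}\frac{\abs{a_h(u,w)-(f,w)}}{\wnm{w}}.
\]
Both the approximation and the consistency contributions will be bounded by each of the two right-hand sides appearing in~\eqref{eq:finalerr}, and the smaller one will be retained.

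For the approximation term I would take $v_h=I_hu$. Directly from the definition of $a_h$,
\[
\wnm{v}^2\le C\lr{\nm{v}{H^1}^2+\iota^2\abs{v}_{H_h^2}^2}.
\]
Inserting $v=u-I_hu$ and invoking Lemma~\ref{lem:interest} elementwise with $k=3$ for both the $H^1$ piece ($j=1$) and the $H^2$ piece ($j=2$) yields $\wnm{u-I_hu}\le C(h^2+\iota h)\abs{u}_{H^3}$; taking instead $k=2$ for the $H^1$ piece and $k=3$ for the $H^2$ piece yields $\wnm{u-I_hu}\le Ch(\abs{u}_{H^2}+\iota\abs{u}_{H^3})$.

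For the consistency error I would integrate by parts element by element in $a_h(u,w)$, transferring all derivatives onto $u$. The resulting volume contribution equals $(f,w)$ by the strong form of~\eqref{eq:sgbvp}, and the boundary pieces assemble into a sum of interelement edge integrals. Since $V_h\subset[H_0^1(\Om)]^2$, the jumps of $w$ and of its tangential derivative vanish on every interior edge, so only $\jump{\pa_n w}$ contributes. The moment-type degrees of freedom from Lemmas~\ref{lem:unisolvent} and~\ref{lem:2ndunisolvency} enforce
\[
\int_e\jump{\pa_n w\cdot t}\dtau=0,\qquad\int_e\jump{\pa_n w\cdot n}\tau^k\dtau=0,\quad k=0,1,
\]
on every interior edge $e$. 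This orthogonality lets me subtract from the smooth coefficient in each edge integral (a trace of derivatives of $u$) its $L^2(e)$-projection onto $[P_{1,-}(e)]^2$ without altering the integral. An edgewise Bramble--Hilbert argument combined with a scaled trace inequality then produces a factor $h_e$ times a local $L^2$-norm of derivatives of $u$, which assembles to $Ch\abs{u}_{H^2}$ from the $(\C\eps(u),\eps(w))$ contribution and $C\iota h\abs{u}_{H^3}$ from the sixth-order $(\D\na_h\eps(u),\na_h\eps(w))$ contribution; an extra power of $h$ on the smoother piece delivers the alternative bound $C(h^2+\iota h)\abs{u}_{H^3}$.

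The main technical obstacle is the sixth-order term: two successive integrations by parts generate several distinct boundary contributions, each paired with a different normal or tangential component of $\pa_n w$, and one must match each contribution against an available moment condition to extract the full $h$ (or $h^2$) factor uniformly in $\iota$. Once this is done, dividing by $\wnm{w}$, combining with the approximation bound and taking the smaller of the two estimates yields~\eqref{eq:finalerr}.
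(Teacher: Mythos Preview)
Your overall plan---second Strang lemma, interpolation estimate for the approximation term, elementwise integration by parts plus the moment conditions for the consistency term---is exactly the route the paper takes. One point is simpler than you anticipate, however: because $V_h\subset[H_0^1(\Om)]^2$, the bilinear piece $(\C\eps(u),\eps(w))$ produces \emph{no} consistency error at all. A single integration by parts on each element gives only $\int_{\partial T}\sigma_{jk}n_j w_k\,\dtau$, and these cancel across edges since $w$ is continuous. Thus there is no ``$Ch\abs{u}_{H^2}$ contribution from the $\C$ part''; the entire consistency functional reduces to
\[
E_h(u,w)=\sum_{e\in\mc{S}(\Om,\mc{T}_h)}\int_e n_in_j\tau_{ijk}\jump{\partial_n w_k}\,\dtau,
\qquad \tau_{ijk}=\iota^2\sigma_{jk,i},
\]
a single edge term coming exclusively from the $\D$ part. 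For this term the paper uses only the zero-mean condition $\int_e\jump{\partial_n w_k}\,\dtau=0$ (not the full $[P_{1,-}(e)]^2$ orthogonality you propose) together with a standard trace/scaling argument, giving $\abs{E_h(u,w)}\le C\iota^2 h\abs{u}_{H^3}\abs{w}_{H_h^2}$ and hence, after dividing by $\wnm{w}\gtrsim\iota\abs{w}_{H_h^2}$, the bound $C\iota h\abs{u}_{H^3}$. This is already dominated by either interpolation bound in~\eqref{estimate:3}, so the two alternatives in~\eqref{eq:finalerr} come entirely from the approximation term, not from two different consistency estimates.
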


\begin{proof}
By the the theorem of Berger, Scott, and Strang~\cite{Berge:1972}, we have
\begin{equation}\label{estimate:2}
\wnm{u-u_h}\le\inf_{v\in V_h}\wnm{u-v}
+\sup_{w\in V_h}\dfrac{E_h(u,w)}{\wnm{w}},
\end{equation}
where $E_h(u,w)=a_h(u,w)-(f,w)$.

By the interpolate estimate~\eqref{eq:interest}, we obtain
\begin{equation}\label{estimate:3}
\inf_{v\in V_h}\wnm{u-v}\le \wnm{u-I_h u}\le\left\{
\begin{aligned}
& C(h^2+\iota h)\abs{u}_{H^3},\\
& Ch(\abs{u}_{H^2}+\iota\abs{u}_{H^3}).
\end{aligned}\right.
\end{equation}

Next, we focus on the estimate of the consistency error. We write
$\ka_{ijk}=(\na\eps(u))_{ijk}=\pa_{x_i}\eps_{jk}(u)$. The stress and couple stress are defined by $\sigma=\C\eps(u)$ and $\tau=\D\na\eps(u)$, respectively. Or
\[
\sigma_{ij}=\C_{ijkl}\eps_{kl}(u)\quad\text{and}\quad
\tau_{ijk}=\D_{ijklmn}\ka_{lmn}(u).
\]
By the symmetry of the tensors $\C$ and $\D$, there holds
\[
\sigma_{ij}=\sigma_{ji}\quad\text{and}\quad\tau_{ijk}=\tau_{ikj}.
\]
By the chain rule and the symmetry of $\C$ and $\D$, we obtain, on each element $T$
and for any $w\in V_h$,
\begin{align*}
&\quad\C\eps(u):\eps(w)+\D\na\eps(u):\na\eps(w)\\
&=\sigma_{jk}\eps_{jk}(w)+\tau_{ijk}\ka_{ijk}(w)=\sigma_{jk}w_{k,j}+\tau_{ijk}w_{k,ij}\\
&=\Lr{(\sigma_{jk}-\tau_{ijk,i})w_k}_{,j}-(\sigma_{jk,j}-\tau_{ijk,ij})w_k
+(\tau_{ijk}w_{k,j})_{,i}.
\end{align*}
Using the above representation and integration by parts, we obtain
\begin{equation}\label{int_1}
\begin{aligned}
\int_T\mathbb{C}\eps(u):\eps(w)&+\mathbb{D}\na\eps(u):\na\eps(w)\dx
=\int_T(\tau_{ijk,ij}-\sigma_{jk,j})w_k\dx\\
&+\int_{\pa T}n_j(\sigma_{jk}-\tau_{ijk,i})w_k\dtau+\int_{\pa T}n_i\tau_{ijk}w_{k,j}\dtau.
\end{aligned}
\end{equation}
Using the fact \(w_{k,j}=n_j\pa_nw_k+t_j\pa_tw_k\) and
\[
n_it_j\tau_{ijk}\pa_tw_k=\pa_t(n_it_j\tau_{ijk}w_k)-\pa_t(n_it_j\tau_{ijk})w_k,
\]
we obtain
\begin{equation}\label{int_2}
\int_{\pa T}n_i\tau_{ijk}w_{k,j}\dtau=\int_{\pa T}n_in_j\tau_{ijk}\pa_nw_k\dtau-\int_{\pa T}(n_it_j\tau_{ijk})w_k\dtau,
\end{equation}
where we have used the fact that the contour integration of tangential derivative along the element boundary is zero.
By~\eqref{int_1},~\eqref{int_2} and the continuity of $w$, we obtain
\[
E_h(u,w)=\sum_{T\in\mc{T}_h}\int_{\pa T}n_in_j\tau_{ijk}\pa_nw_k\dtau
=\sum_{e\in\mc{S}(\Om,\mc{T}_h)}\int_{e} n_in_j\tau_{ijk}\jump{\pa_n w_k}\dtau,
\]
where $\tau_{ijk}=\iota^2\sigma_{jk,i}$.

By
\[
\int_e\jump{\pa_n(w\cdot n)}\dtau=0\quad\text{and}\quad
\int_e\jump{\pa_n(w\cdot t)}\dtau=0,
\]
we obtain, for $k=1,2$,
\[
\int_e\jump{\pa_n w_k}\dtau=0.
\]
Employing the standard trace inequality and scaling argument, we obtain
\[
\abs{E_h(u,w)}\le Ch\abs{\tau}_{H^1}\abs{\pa_n w}_{H_h^1}\le C\iota^2 h\abs{u}_{H^3}\abs{w}_{H_h^2}.
\]
Substituting the above estimate and~\eqref{estimate:3} into~\eqref{estimate:2}, we obtain~\eqref{eq:finalerr}.\qed
\end{proof}

Combining the error estimate~\eqref{eq:finalerr} and the regularity results in Lemma~\ref{lemma:reg}, proceeding along the same line of~\cite[Theorem 5.2]{Tai:2001}, we could obtain the following $\iota-$independent error estimate
\begin{equation}\label{eq:newerr}
\wnm{u-u_h}\le Ch^{1/2}\nm{f}{L^2},
\end{equation}
where $C$ is independent of $\iota$ and $h$. We leave the details to the interested readers.
\section{Numerical example}~\label{sec:numer}
In this section we provide two numerical examples that show the accuracy of the proposed elements, and the robustness
of the elements with respect to the microscaopic parameter $\iota$. The first example is performed on the
uniform mesh, while the second one is performed on the nonuniform mesh. As a first
step toward understanding the size effect of the heterogeneous materials, we test
the proposed elements for a benchmark problem with smooth solution. Tests for realistic problems
will appear in the forthcoming work.

Let $\Om=(0,1)^2$ and
\[
u_1=(\exp(\cos2\pi x)-e)(\exp(\cos2\pi y)-e), u_2=(\cos2\pi x-1)(\cos4\pi y -1).
\]
The force $f$ is obtained by~\eqref{eq:sgbvp}.

First, the triangulation of the unit square for the uniform mesh is illustrated in Figure~\ref{pic:1}$_a$. In Table~\ref{tab:1} and Table~\ref{tab:2}, we report the convergence rates for both
elements in the energy norm $\wnm{u-u_h}/\wnm{u}$ for different values of $\iota$ and $h$ with $\lam=\mu=1$. We observe that the convergence rate appears to be linear when $\iota$ is large, while it turns out to be quadratic when $\iota$ is close to zero, which is consistent with the theoretical prediction in the estimate~\eqref{eq:finalerr}.
\begin{table}[htbp]
\caption{The convergence rate of the first element over uniform mesh with $\lambda=\mu=1$.}\label{tab:1}
\begin{tabular}{lllllll}
\hline\noalign{\smallskip}
$\iota\backslash h$ &$1/16$ &$1/32$ &$1/64$ &$1/128$ &$1/256$ &$1/512$\\
\hline\noalign{\smallskip}
1e0 & 2.37e-1 &1.38e-1 &7.31e-2 &3.73e-2 &1.87e-2 &9.38e-3\\
\noalign{\smallskip}
\text{rate}& &0.79&0.91&0.97&0.99&1.00\\
\noalign{\smallskip}
1e-1 &1.81e-1 &1.04e-1 &5.47e-2 &2.78e-2 &1.40e-2 &6.99e-3\\
\noalign{\smallskip}
\text{rate}&&0.81&0.92&0.98&0.99&1.00\\
\noalign{\smallskip}
1e-2 &3.44e-2 &1.66e-2 &8.28e-3 &4.15e-3 &2.07e-3 &1.04e-3\\
\noalign{\smallskip}
\text{rate}&&1.06&1.00&1.00&1.00&1.00\\
\noalign{\smallskip}
1e-3 &1.87e-2 &5.25e-3 &1.54e-3 &5.35e-4 &2.26e-4 &1.07e-4\\
\noalign{\smallskip}
\text{rate}& &1.83&1.77&1.53&1.25&1.08\\
\noalign{\smallskip}
1e-4 &1.85e-2 &4.96e-3 &1.27e-3 &3.22e-4 &8.30e-5 &2.29e-5\\
\noalign{\smallskip}
\text{rate}&&1.90&1.96&1.98&1.96&1.86\\
\noalign{\smallskip}
1e-5 &1.85e-2 &4.95e-3 &1.27e-3 &3.19e-4 &7.98e-5 &2.00e-5\\
\noalign{\smallskip}
\text{rate}&&1.90&1.97&1.99&2.00&2.00\\
\noalign{\smallskip}
\hline
\end{tabular}
\end{table}
\begin{table}[htbp]
\caption{The convergence rate of the second element over uniform mesh with $\lambda=\mu=1$.}\label{tab:2}
\begin{tabular}{lllllll}
\hline\noalign{\smallskip}
$\iota\backslash h$ &$1/16$ &$1/32$ &$1/64$ &$1/128$ &$1/256$ &$1/512$ \\
\hline\noalign{\smallskip}
1e0 &2.73e-1 &1.67e-1 &9.22e-2 &4.76e-2 &2.40e-2 &1.20e-2\\
\noalign{\smallskip}
\text{Rate}&&0.70&0.86&0.95&0.99&1.00\\
\noalign{\smallskip}
1e-1 &2.10e-1 &1.27e-1 &6.91e-2 &3.55e-2 &1.79e-2 &8.97e-3\\
\noalign{\smallskip}
\text{Rate}&&0.73&0.88&0.96&0.99&1.00\\
\noalign{\smallskip}
1e-2 &4.01e-2 &2.04e-2 &1.05e-2 &5.30e-3 &2.66e-3 &1.33e-3\\
\noalign{\smallskip}
\text{Rate}&&0.97&0.96&0.98&1.00&1.00\\
\noalign{\smallskip}
1e-3 &2.13e-2 &6.10e-3 &1.83e-3 &6.54e-4 &2.84e-4 &1.36e-4\\
\noalign{\smallskip}
\text{Rate}&&1.80&1.74&1.48&1.20&1.06\\
\noalign{\smallskip}
1e-4 &2.10e-2 &5.73e-3 &1.48e-3 &3.75e-4 &9.70e-5 &2.70e-5\\
\noalign{\smallskip}
\text{Rate}&&1.87&1.95&1.98&1.95&1.85\\
\noalign{\smallskip}
1e-5 &2.10e-2 &5.73e-3 &1.47e-3 &3.71e-4 &9.29e-5 &2.33e-5\\
\noalign{\smallskip}
\text{Rate}&&1.87&1.96&1.99&2.00&2.00\\
\hline
\end{tabular}
\end{table}

Next, we test both elements over a nonuniform mesh. The initial mesh is generated by the function "initmesh" of the partial differential equation toolbox of MATLAB. The initial mesh consists of $872$ triangles and $469$ vertices, and the
maximum mesh size is $h=1/16$; See Figure~\ref{pic:1}$_b$. In Tables~\ref{tab:3},~\ref{tab:4},~\ref{tab:5} and~\ref{tab:6}, we report the convergence rate of both elements in the energy norm when $\lambda=\mu=1$ and $\lambda=10,\mu=1$. It seems the convergence rate is the same with that over the uniform mesh. The first element is slightly more accurate than the second one, in particular over the nonuniform mesh.%
\begin{figure}[htbp]
\centering
\subfigure[]{\includegraphics[width=5.5cm, height=4.2cm]{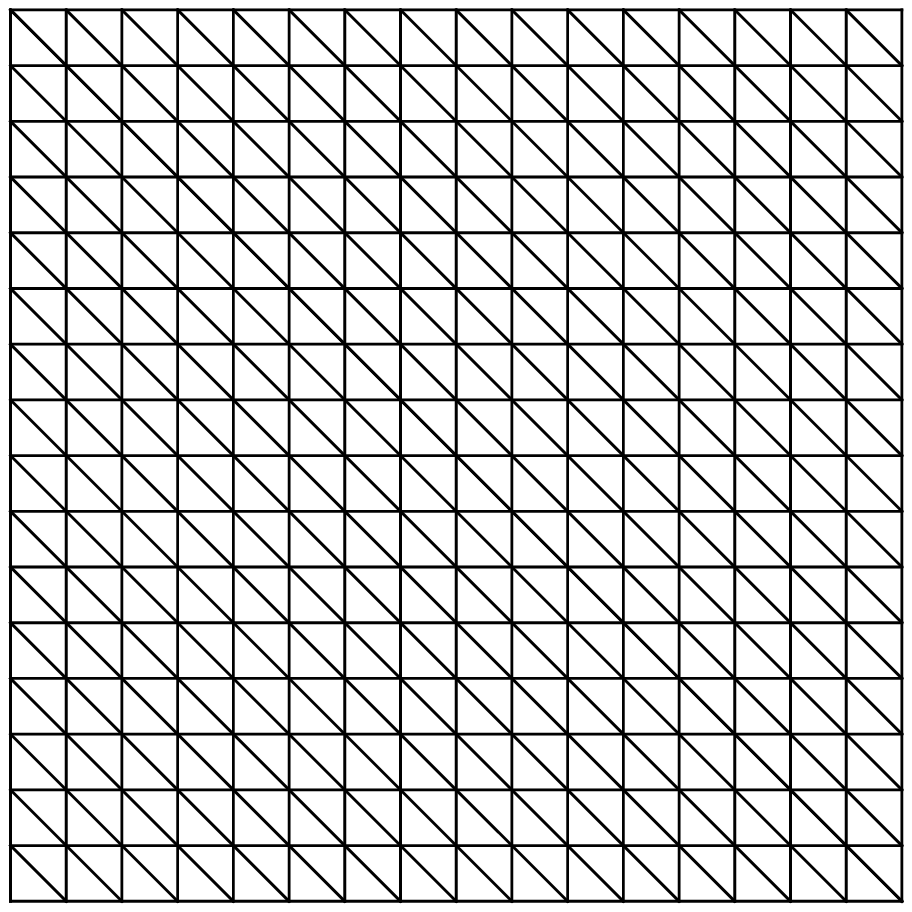}}
\subfigure[]{\includegraphics[width=5.5cm, height=4.2cm]{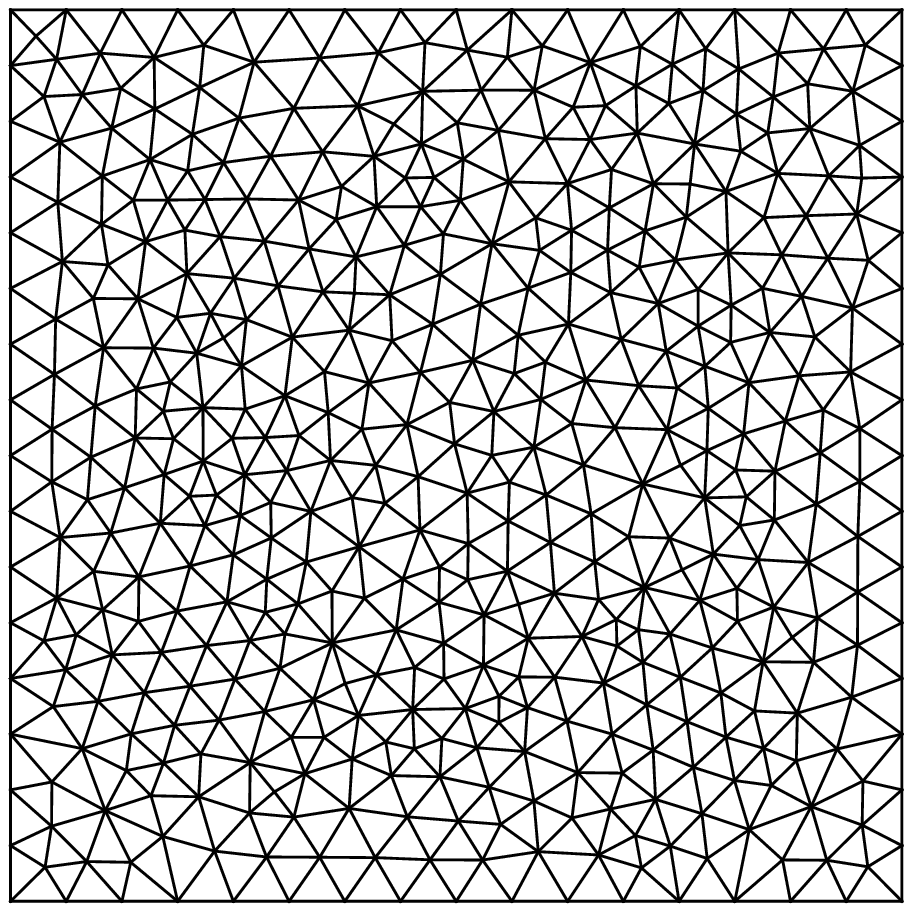}}
\caption{Plots of the mesh. (a) is the uniform triangulations with $h=1/16$. (b) is the nonuniform mesh with maximum mesh size $h=1/16$.}\label{pic:1}
\end{figure}
\begin{table}[htbp]
\caption{The convergence rate of the first element over nonuniform mesh with $\lambda=\mu=1$.}\label{tab:3}
\begin{tabular}{lllllll}
\hline
$\iota\backslash h$ &$1/16$ &$1/32$ &$1/64$ &$1/128$ &$1/256$&$1/512$ \\
\hline\noalign{\smallskip}
1e0 &1.26e-1&6.80e-2&3.61e-2&1.87e-2&9.57e-3&4.84e-3\\
\noalign{\smallskip}
\text{Rate}&&0.89&0.91&0.94&0.97&0.98\\
\noalign{\smallskip}
1e-1&9.43e-2&5.08e-2&2.69e-2&1.40e-2&7.14e-3&3.61e-03\\
\noalign{\smallskip}
\text{Rate}&&0.89&0.92&0.95&0.97&0.98\\
\noalign{\smallskip}
1e-2&1.66e-2&7.92e-3&4.05e-3&2.08e-3&1.06e-3&5.35e-4\\
\noalign{\smallskip}
\text{Rate}&&1.07&0.97&0.96&0.97&0.99\\
\noalign{\smallskip}
1e-3&8.24e-3&2.24e-3&6.79e-4&2.52e-4&1.12e-4&5.46e-5\\
\noalign{\smallskip}
\text{Rate}&&1.88&1.73&1.43&1.16&1.04\\
\noalign{\smallskip}
1e-4&8.09e-3&2.09e-3&5.33e-4&1.36e-4&3.56e-5&1.01e-5\\
\noalign{\smallskip}
\text{Rate}&&1.95&1.97&1.97&1.94&1.81\\
\noalign{\smallskip}
1e-5&8.09e-3&2.08e-3&5.31e-4&1.34e-4&3.38e-5&8.48e-6\\
\noalign{\smallskip}
\text{Rate}&&1.96&1.97&1.98&1.99&1.99\\
\hline
\end{tabular}
\end{table}
\begin{table}[htbp]
\caption{The convergence rate of the first element over nonuniform mesh with $\lambda=10$ and $\mu=1$.}\label{tab:4}
\begin{tabular}{lllllll}
\hline
$\iota\backslash h$ &$1/16$ &$1/32$ &$1/64$ &$1/128$ &$1/256$&$1/512$ \\
\hline\noalign{\smallskip}
1e0&1.09e-1&5.87e-2&3.12e-2&1.63e-2&8.36e-3&4.24e-3\\
\noalign{\smallskip}
\text{Rate}&&0.89&0.91&0.94&0.96&0.98\\
\noalign{\smallskip}
1e-1&8.22e-2&4.43e-2&2.35e-2&1.23e-2&6.30e-3&3.19e-3\\
\noalign{\smallskip}
\text{Rate}&&0.89&0.91&0.94&0.96&0.98\\
\noalign{\smallskip}
1e-2&1.43e-2&6.93e-3&3.58e-3&1.85e-3&9.47e-4&4.79e-4\\
\noalign{\smallskip}
\text{Rate}&&1.05&0.96&0.95&0.97&0.98\\
\noalign{\smallskip}
1e-3&6.67e-3&1.82e-3&5.66e-4&2.18e-4&9.96e-5&4.88e-5\\
\noalign{\smallskip}
\text{Rate}&&1.87&1.69&1.38&1.13&1.03\\
\noalign{\smallskip}
1e-4&6.52e-3&1.67e-3&4.25e-4&1.09e-4&2.87e-5&8.40e-6\\
\noalign{\smallskip}
\text{Rate}&&1.97&1.97&1.97&1.92&1.77\\
\noalign{\smallskip}
1e-5&6.52e-3&1.67e-3&4.23e-4&1.07e-4&2.69e-5&6.75e-6\\
\noalign{\smallskip}
\text{Rate}&&1.97&1.98&1.99&1.99&1.99\\
\hline
\end{tabular}
\end{table}
\begin{table}[htbp]
\caption{The convergence rate of the second element over nonuniform mesh with $\lambda=\mu=1$.}\label{tab:5}
\begin{tabular}{lllllll}
\hline
$\iota\backslash h$ &$1/16$ &$1/32$ &$1/64$ &$1/128$ &$1/256$&$1/512$ \\
\hline\noalign{\smallskip}
1e0&2.50e-1&1.44e-1&8.06e-2&4.29e-2&2.20e-2&1.11e-02\\
\noalign{\smallskip}
\text{Rate}&&0.80&0.84&0.91&0.96&0.98\\
\noalign{\smallskip}
1e-1&1.88e-1&1.08e-1&6.02e-2&3.20e-2&1.64e-2&8.30e-3\\
\noalign{\smallskip}
\text{Rate}&&0.80&0.84&0.91&0.96&0.98\\
\noalign{\smallskip}
1e-2&3.14e-2&1.65e-2&9.04e-3&4.77e-3&2.44e-3&1.23e-3\\
\noalign{\smallskip}
\text{Rate}&&0.92&0.87&0.92&0.97&0.99\\
\noalign{\smallskip}
1e-3&1.34e-2&3.79e-3&1.24e-3&5.11e-4&2.48e-4&1.24e-4\\
\noalign{\smallskip}
\text{Rate}&&1.82&1.62&1.27&1.04&1.00\\
\noalign{\smallskip}
1e-4&1.31e-2&3.43e-3&8.85e-4&2.28e-4&6.06e-5&1.82e-5\\
\noalign{\smallskip}
\text{Rate}&&1.93&1.96&1.96&1.91&1.74\\
\noalign{\smallskip}
1e-5&1.31e-2&3.43e-3&8.80e-4&2.23e-4&5.61e-5&1.41e-5\\
\noalign{\smallskip}
\text{Rate}&&1.93&1.96&1.98&1.99&1.99\\
\hline
\end{tabular}
\end{table}
\begin{table}[htbp]
\caption{The convergence rate of the second element over nonuniform mesh with $\lambda=10$ and $\mu=1$.}\label{tab:6}
\begin{tabular}{lllllll}
\hline
$\iota\backslash h$ &$1/16$ &$1/32$ &$1/64$ &$1/128$ &$1/256$&$1/512$ \\
\hline
1e0&2.47e-1&1.35e-1&7.41e-2&3.99e-2&2.09e-2&1.07e-2\\
\noalign{\smallskip}
\text{Rate}&&0.87&0.86&0.89&0.93&0.97\\
\noalign{\smallskip}
1e-1&1.87e-1&1.02e-1&5.59e-2&3.00e-2&1.58e-2&8.07e-3\\
\noalign{\smallskip}
\text{Rate}&&0.88&0.86&0.89&0.93&0.97\\
\noalign{\smallskip}
1e-2&3.19e-2&1.59e-2&8.53e-3&4.55e-3&2.37e-3&1.21e-3\\
\noalign{\smallskip}
\text{Rate}&&1.01&0.90&0.90&0.94&0.97\\
\noalign{\smallskip}
1e-3&1.43e-2&3.92e-3&1.25e-3&5.04e-4&2.43e-4&1.23e-4\\
\noalign{\smallskip}
\text{Rate}&&1.87&1.65&1.31&1.05&0.99\\
\noalign{\smallskip}
1e-4&1.41e-2&3.63e-3&9.32e-4&2.39e-4&6.34e-5&1.88e-5\\
\noalign{\smallskip}
\text{Rate}&&1.96&1.96&1.96&1.92&1.75\\
\noalign{\smallskip}
1e-5&1.41e-2&3.63e-3&9.29e-4&2.35e-4&5.92e-5&1.49e-5\\
\noalign{\smallskip}
\text{Rate}&&1.96&1.97&1.98&1.99&1.99\\
\hline
\end{tabular}
\end{table}
\section{Conclusion}
We prove a Korn-like inequality and its discrete analog for the strain gradient elastic problem, which is crucial for the well-posedness of the underlying variational problems as the Korn's
inequality for the linearized elasticity. Guided by the discrete Korn's inequality, we constructed two nonconforming
elements that converge uniformly in the microscopic parameter with optimal convergence rate. Numerical
experiments validate the theoretical results. The extension of the elements to three dimensional
problem and to high order would be very interesting and challenging. Applications of these elements to realistic problem in strain gradient plasticity is another topic deserves further pursuit. We leave all these issues in a forthcoming work.
\begin{acknowledgements}
The work of Li was supported by Science Challenge Project, No. TZ 2016003. The work of Ming was partially supported by the National Natural Science Foundation of China for
Distinguished Young Scholars 11425106, and National Natural Science Foundation of China grants 91630313, and by the support of CAS NCMIS. The work of Shi
was partially supported by the National Natural Science Foundation of China grant 11371359. We are grateful to the anonymous referees for their valuable suggestions.
\end{acknowledgements}
\bibliographystyle{spbasic}
\bibliography{sg}
\end{document}